\documentclass[reqno]{amsart}
\usepackage{amsfonts}
\usepackage{amsmath}
\usepackage{amssymb}
\usepackage{mathrsfs}
\usepackage[colorlinks]{hyperref}
\usepackage{tikz}
\usetikzlibrary{matrix}
\usepackage{algorithm}
\usepackage{algpseudocode}  
\usepackage{algorithmicx}
\usetikzlibrary{shadows,positioning}
\usepackage{graphicx,lscape}
\usepackage{subcaption}
\graphicspath{}
\DeclareGraphicsExtensions{.pdf,.png,.jpg}

\numberwithin{equation}{section}
\theoremstyle{plain}
\newtheorem{thm}{Theorem}[section]
\newtheorem{lem}[thm]{Lemma}

\theoremstyle{definition}

\theoremstyle{remark}
\newtheorem{rem}{Remark}[section]
\newtheorem{defn}{Definition}

\let\Re\relax
\DeclareMathOperator{\Re}{Re}

\begin{document}
\title[\hfil Heat Equation driven by mixed local-nonlocal operators]
{Heat Equation driven by mixed local-nonlocal operators with non-regular space-dependent coefficients}

\author[Arshyn Altybay]{Arshyn Altybay}
\address{
Arshyn Altybay: 
\endgraf
Institute of Mathematics and Mathematical Modeling,
\endgraf
 125 Pushkin str., 050010 Almaty, Kazakhstan
\endgraf
{\it E-mail address} {\rm arshyn.altybay@gmail.com, arshyn.altybay@math.kz}
}

\author[M. Ruzhansky]{Michael Ruzhansky}
\address{
  Michael Ruzhansky:
  \endgraf
  Department of Mathematics: Analysis, Logic and Discrete Mathematics
  \endgraf
  Ghent University, Krijgslaan 281, Building S8, B 9000 Ghent, Belgium
  \endgraf
  and
  \endgraf
  School of Mathematical Sciences
  \endgraf
  Queen Mary University of London, United Kingdom
  \endgraf
  {\it E-mail address} {\rm michael.ruzhansky@ugent.be}
}

\thanks{}
\subjclass{35D99, 35K67, 34A45}

\keywords{heat equation, mixed local-nonlocal operator, very weak solutions, singular coefficients, energy estimate}

\begin{abstract}
In this paper, we study the Cauchy problem for a heat equation governed by a mixed local--nonlocal diffusion operator with spatially dependent coefficients. For bounded measurable coefficients satisfying uniform positivity conditions and initial data in $H^1(\mathbb R^d)$, we first establish existence and uniqueness of weak solutions in the natural energy space and derive an a priori energy estimate. We then extend the analysis to strictly positive distributional coefficients and compactly supported distributional initial data by means of a Friedrichs-type regularisation procedure. Under suitable moderateness assumptions on the regularising nets, we establish the existence of very weak solutions and prove uniqueness modulo negligible perturbations of admissible regularisations. Finally, under appropriate convergence assumptions on the regularised coefficients and initial data, together with preservation of uniform positivity, we prove consistency of the very weak solution with the corresponding weak solution.

\end{abstract}

\maketitle
\allowdisplaybreaks
\section{Introduction}\label{0}
Motivated by ecological models in which individuals move by a superposition of
(i) local Brownian diffusion and (ii) long-range L\'evy-type jumps,
Dipierro and Valdinoci \cite{DV21} derived evolution equations whose dispersal mechanism is governed by the mixed operator
\begin{equation}\label{eq:mixed-operator}
\mathcal{L}_0 u \;:=\; -\Delta u + (-\Delta)^s u,\qquad s\in(0,1).
\end{equation}
Here the local part $-\Delta$ describes short-range random motion, whereas the fractional Laplacian $(-\Delta)^s$
encodes long-range interactions and jump processes.
In the whole space $\mathbb{R}^d$, $(-\Delta)^s$ may be defined via the principal value integral
\[
(-\Delta)^s u(x)
= c_{d,s}\,\mathrm{P.V.}\!\int_{\mathbb{R}^d}\frac{u(x)-u(y)}{|x-y|^{d+2s}}\,dy,
\]
so that $\mathcal{L}_0$ corresponds (up to sign conventions) to the infinitesimal generator of a stochastic motion
obtained by superposing a Brownian component with an independent symmetric $\alpha$-stable jump process, $\alpha=2s$
\cite{CKS12,CKSV12}.
In bounded domains, the mixed nature of \eqref{eq:mixed-operator} leads naturally to boundary conditions that combine
a classical Neumann constraint for the local part with a genuinely nonlocal flux condition for the fractional component,
as proposed in \cite{DV21}. A systematic PDE analysis of elliptic problems driven by \eqref{eq:mixed-operator}
was later developed in \cite{BDVVe22}.

The mixed local--nonlocal operator $\mathcal{L}_0$ has attracted considerable attention in recent years and has been investigated from several perspectives. This sustained interest is largely driven by its ability to capture mechanisms in which \emph{local diffusion} coexists with \emph{long-range interactions} and jump-like dispersal. In probability theory, operators of the form $-\Delta+(-\Delta)^s$ arise as infinitesimal generators of stochastic dynamics obtained by superimposing a Brownian motion with an independent L\'evy jump process, thereby modelling trajectories with frequent small displacements interspersed with occasional long relocations. In mathematical biology and ecology, the same superposition naturally appears in models of animal movement and optimal foraging, where long-range relocation (often idealised via L\'evy flights) complements local exploratory behaviour and leads to mixed dispersal laws. Similar hybrid effects are also used to describe \emph{anomalous transport} in complex environments (e.g.\ dispersal in porous or turbulent media), \emph{heterogeneous materials} in which short-range diffusion is coupled with nonlocal transfer, and \emph{diffusion with trapping, absorption, or killing}, where lower-order potential terms represent loss mechanisms or reactions. For further background and additional references, we refer the reader to \cite{CH15, DLV23, DLV24, DLV24b, BDVVe25, KT25, YSVM25a, YSVM25b} and the works cited therein.

In this paper, we extend \eqref{eq:mixed-operator} by allowing variable coefficients and consider the Cauchy problem
\begin{equation}\label{eq:1.1}
\left\{
\begin{array}{l}
u_t(t,x) + \mathcal{L}u(t,x)= 0, \qquad (t,x)\in(0,T)\times \mathbb{R}^{d},\\
u(0,x)=u_{0}(x), \qquad x \in \mathbb{R}^{d},
\end{array}
\right.
\end{equation}
where $\mathcal{L}$ is the mixed local--nonlocal operator
\[
\mathcal{L}u
= -\nabla \cdot\bigl(a(x)\nabla u\bigr)
+ (-\Delta)^{s/2}\Bigl(b(x)\,(-\Delta)^{s/2} u\Bigr)
+ c(x)u,
\qquad 0<s<1.
\]
Here $u$ denotes a scalar state variable (e.g., temperature, concentration, density), $u_0$ is the initial datum. The coefficients $a,b,c$ are assumed real-valued, with
$a(x)\ge a_0>0$, $b(x)\ge b_0>0$, and $c(x)\ge c_0 > 0$ for a.e.\ $x\in\mathbb{R}^d$.
Moreover, $(-\Delta)^{s/2}$ denotes the (Riesz) fractional Laplacian.
Note that when $b\equiv \text{const}$, the fractional term reduces to $b\,(-\Delta)^s u$.

More precisely, the divergence-form term $-\nabla\!\cdot(a(x)\nabla u)$ models heterogeneous classical diffusion,
whereas the fractional component $(-\Delta)^{s/2}\bigl(b(x)(-\Delta)^{s/2}u\bigr)$ incorporates nonlocal effects of order $2s$
modulated by $b(x)$. The lower-order term $c(x)u$ may be interpreted, depending on the application, as reaction, damping,
or absorption.

The main purpose of this work is to extend the analysis to coefficients that may be singular in the spatial variable. In the very weak setting, we allow
\[
a,b,c\in\mathcal D'(\mathbb R^d),
\,
u_0\in\mathcal E'(\mathbb R^d),
\]
where the coefficients are assumed to be strictly positive in the sense of distributions. More precisely, there exist constants $(a_0,b_0,c_0>0)$ such that
\[
\langle a-a_0,\varphi\rangle\ge0,\qquad
\langle b-b_0,\varphi\rangle\ge0,\qquad
\langle c-c_0,\varphi\rangle\ge0
\]
for every non-negative test function
($\varphi\in\mathcal D(\mathbb R^d)$).
The regularising nets are additionally required to satisfy the global moderateness conditions introduced in Section~\ref{3}.

For distributional coefficients, products such as
$a\nabla u,\,
b(-\Delta)^{s/2}u,
\,
cu$
cannot in general be interpreted within the classical theory of distributions. This is a manifestation of the well-known obstruction to multiplication of arbitrary distributions identified by Schwartz \cite{Sch54}. Consequently, the equation cannot in general be treated directly by the standard weak formulation.

To overcome this difficulty, we employ the notion of \emph{very weak solutions}, introduced by Garetto and the second author \cite{GR15}. The basic idea is to regularise the distributional coefficients and initial data by convolution with suitable mollifiers. This generates a net of Cauchy problems with smooth coefficients and data, indexed by the regularisation parameter $\varepsilon\in(0,1]$. For each fixed $\varepsilon$, the corresponding regularised problem can be treated by the classical weak theory, yielding a net of solutions $(u_\varepsilon)_\varepsilon$. This net is called a very weak solution when it satisfies suitable growth estimates with respect to $\varepsilon$, referred to as \emph{moderateness}. The notion of very weak solution is inspired by the theory of algebras of generalised functions \cite{GO14,GO15,GHO09,GKOS01,Obe92}. However, in the present approach we work directly with nets of regularised functions and solutions rather than with equivalence classes in a generalised-function algebra. More generally, this net-based formulation provides flexibility in the choice of the regularisation procedure: different coefficients and data
may be regularised using different mollifiers and regularisation scales, depending on the structure of the problem. In the present work, however, we employ a common Friedrichs mollifier and a common regularisation scale for the coefficients and the initial datum.

This approach has proved effective for second-order hyperbolic and parabolic equations with non-regular coefficients \cite{RT17a,RT17b}, and has subsequently been extended to equations with singular potentials, distributional lower-order terms, and related classes of evolution equations \cite{ART19,ARST21a,ARST21b,ARST21c,ARST25,CRT21,SW22,CRT22a,CRT22b,BLO22,RY22,RST23,CDRT23,CDRT24,RSY24,GS24}.

The main contributions of the present paper are as follows. First, for bounded measurable coefficients satisfying uniform positivity conditions, we establish existence and uniqueness of weak solutions and derive an energy estimate in the natural $H^1(\mathbb R^d)$-based framework. Second, for strictly positive distributional coefficients and distributional initial data, we construct very weak solutions by spatial regularisation and prove moderateness of the corresponding solution nets. Third, we establish uniqueness modulo negligible perturbations of admissible regularisations. Finally, we prove consistency with the regular weak theory under suitable convergence assumptions on the regularised coefficients and initial data. 

Thus, the present work provides a unified weak and very weak treatment of a heat equation driven by a variable-coefficient mixed local--nonlocal diffusion operator. To the best of our knowledge, a very weak solution theory for this variable-coefficient mixed local–nonlocal parabolic problem has not previously been developed.

The remainder of this paper is organised as follows.
Section~\ref{1} contains the necessary preliminaries.
In Section~\ref{2}, we establish well-posedness and energy estimates
for the regular problem.
Section~\ref{3} develops the very weak solution framework and proves
existence, uniqueness, and consistency results for spatially irregular
coefficients.
Section~\ref{4} concludes the paper.

\section{Preliminaries}\label{1}
In this section, we recall several standard definitions and auxiliary results used throughout the paper. The material on the Riesz fractional Laplacian, fractional Sobolev spaces, the Gagliardo seminorm, and the equivalence between
Fourier and integral characterisations of fractional norms can be found, for example, in \cite{DNPV12,Kwa17,AF03}. The Fourier transform convention and Plancherel identity are classical; see \cite{SW71,LL01}. The functional-analytic
framework based on Gelfand triples and the Lions--Magenes lemma is standard in the variational theory of parabolic equations; see \cite{LM72,Bre11,Eva10}.

\begin{defn}[\textbf{Riesz fractional Laplacian in $\mathbb R^d$}]\label{def:frac-lap-Rd}
Let $0<s<1$ and $u\in \mathcal S(\mathbb R^d)$. The (Riesz) fractional Laplacian is defined by
\[
(-\Delta)^s u(x)= C_{d,s}\,\mathrm{p.v.}\int_{\mathbb R^d}\frac{u(x)-u(y)}{|x-y|^{d+2s}}\,dy,
\]
where $C_{d,s}>0$ is a normalization constant. Equivalently, in Fourier variables,
\[
\widehat{(-\Delta)^s u}(\xi)=|\xi|^{2s}\,\hat u(\xi).
\]
\end{defn}

\begin{defn}[\textbf{Fourier transform}]\label{def:fourier}
For $f\in L^1(\mathbb R^d)$ we define
\[
\hat f(\xi)=\mathcal F(f)(\xi):=\int_{\mathbb R^d} e^{-ix\cdot \xi} f(x)\,dx,
\qquad
f(x)=\mathcal F^{-1}(\hat f)(x)=\frac{1}{(2\pi)^d}\int_{\mathbb R^d} e^{ix\cdot \xi}\hat f(\xi)\,d\xi.
\]
\end{defn}

\begin{lem}[\textbf{Plancherel identity}]\label{lem:plancherel}
For $f\in L^2(\mathbb R^d)$ we have $\|f\|_{L^2}^2=\frac{1}{(2\pi)^d}\|\hat f\|_{L^2}^2$.
\end{lem}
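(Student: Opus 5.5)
The plan is the classical one: prove the identity on a dense subclass where everything is explicit, then extend by continuity. With the convention of Definition~\ref{def:fourier}, I would first work with $f\in L^1(\mathbb R^d)\cap L^2(\mathbb R^d)$, or more conveniently $f\in\mathcal S(\mathbb R^d)$. For Schwartz functions, $\hat f\in\mathcal S(\mathbb R^d)$ and the inversion formula in Definition~\ref{def:fourier} holds pointwise. Writing
\[
\|f\|_{L^2}^2=\int f(x)\overline{f(x)}\,dx=\int \overline{f(x)}\,\frac{1}{(2\pi)^d}\int e^{ix\cdot\xi}\hat f(\xi)\,d\xi\,dx ,
\]
I would apply Fubini, which is justified because $|\overline{f(x)}\hat f(\xi)|$ is integrable on $\mathbb R^{2d}$. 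This gives $\frac{1}{(2\pi)^d}\int \hat f(\xi)\,\overline{\int e^{-ix\cdot\xi}f(x)\,dx}\,d\xi=\frac{1}{(2\pi)^d}\|\hat f\|_{L^2}^2$.

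The main obstacle is the inversion theorem on $\mathcal S$, which I would take as classical. If a self-contained argument is wanted, insert the Gaussian damping factor $e^{-\delta|\xi|^2}$. Use the explicit Fourier transform of the Gaussian, computed by completing the square or by solving the ODE $\partial_\xi\hat g=-\tfrac{\xi}{2}\hat g$ in one variable and taking products. This shows that $\frac{1}{(2\pi)^d}\int e^{ix\cdot\xi}e^{-\delta|\xi|^2}\hat f(\xi)\,d\xi=(f*G_\delta)(x)$, where $G_\delta$ is an approximate identity. Letting $\delta\to0$ then gives the inversion by dominated convergence together with the continuity of $f$.

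Next comes the extension to all of $L^2$. The identity shows that $f\mapsto(2\pi)^{-d/2}\hat f$ is an isometry on the dense subspace $\mathcal S\subset L^2$. It therefore extends uniquely to an isometry on $L^2(\mathbb R^d)$, and this extension is what $\hat f$ means for $f\in L^2$. One consistency point needs checking: for $f\in L^1\cap L^2$, the extension must agree with the integral definition. To see this, take $f_n=(f\chi_{B_n})*\rho_{1/n}\in C_c^\infty$, which converge to $f$ in both $L^1$ and $L^2$. The $L^1$ convergence gives $\hat f_n\to\hat f$ uniformly. The $L^2$ convergence makes $\hat f_n$ Cauchy in $L^2$, and its limit agrees a.e.\ with the uniform limit along a subsequence. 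Passing to the limit in $\|f_n\|_{L^2}^2=(2\pi)^{-d}\|\hat f_n\|_{L^2}^2$ then yields the statement.
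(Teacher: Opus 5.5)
The paper does not prove this lemma. It is stated as a classical fact, and the reader is referred to the standard Fourier-analysis references cited at the start of Section~\ref{1}. Your argument is a correct and complete proof. It also works directly with the normalisation of Definition~\ref{def:fourier}; the usual references often use $e^{-2\pi i x\cdot\xi}$, for which no factor $(2\pi)^{-d}$ appears. And it makes explicit a point the paper's statement glosses over. Definition~\ref{def:fourier} defines $\hat f$ only for $f\in L^1$, so for general $f\in L^2$ the identity is meaningful only after $\mathcal F$ has been extended by continuity. Your check that this extension agrees with the integral formula on $L^1\cap L^2$ is exactly what is needed.

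The route more commonly found in textbooks is slightly more economical. For $f\in L^1\cap L^2$, Fubini gives $\int|\hat f(\xi)|^2e^{-\delta|\xi|^2}\,d\xi=(2\pi)^d\int\overline{f}\,(f*G_\delta)\,dx$. Letting $\delta\to0$, with monotone convergence on the left and $f*G_\delta\to f$ in $L^2$ on the right, yields the identity directly for the integral definition. This avoids first proving the inversion theorem on $\mathcal S$. Since $L^1\cap L^2$ is itself dense in $L^2$, extending from there also makes your consistency step unnecessary. Your version costs a little more, namely inversion on $\mathcal S$ plus the identification of the two definitions on $L^1\cap L^2$, but every step is elementary and correctly justified.
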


\begin{defn}[\textbf{Gagliardo seminorm}]\label{def:gagliardo}
Let $0<s<1$. For $u:\mathbb R^d\to\mathbb R$ define
\[
[u]_{H^s(\mathbb R^d)}^2:=\iint_{\mathbb R^d\times \mathbb R^d}\frac{|u(x)-u(y)|^2}{|x-y|^{d+2s}}\,dx\,dy.
\]
\end{defn}

\begin{lem}[\textbf{Equivalence of fractional norms}]\label{lem:Hs-equivalence}
Let $0<s<1$. Then there exists $C=C(d,s)>0$ such that for all $u\in H^s(\mathbb R^d)$,
\[
\|(-\Delta)^{s/2}u\|_{L^2}^2 \simeq [u]_{H^s(\mathbb R^d)}^2,
\qquad
\|u\|_{H^s}^2 \simeq \|u\|_{L^2}^2+[u]_{H^s}^2.
\]
\end{lem}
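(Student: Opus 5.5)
The plan is to prove the first equivalence with an explicit constant, computed on the Fourier side, and then deduce the second from it together with Plancherel (Lemma~\ref{lem:plancherel}). By density of $\mathcal S(\mathbb R^d)$ in $H^s(\mathbb R^d)$ it suffices to take $u\in\mathcal S(\mathbb R^d)$. Both sides then extend to $H^s$. The left side is continuous in the $H^s$ norm via Fourier. For the right side I use Fatou's lemma along an a.e.\ convergent subsequence, together with the identity applied to Cauchy differences; this gives convergence of the Gagliardo seminorm as well.

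\textbf{Step 1: the Gagliardo seminorm in Fourier variables.} Substitute $y=x+h$ and use Fubini/Tonelli:
\[
[u]_{H^s}^2=\int_{\mathbb R^d}\frac{\|u(\cdot+h)-u\|_{L^2}^2}{|h|^{d+2s}}\,dh .
\]
Since $\widehat{u(\cdot+h)-u}(\xi)=(e^{ih\cdot\xi}-1)\hat u(\xi)$, Plancherel and Tonelli give
\[
[u]_{H^s}^2=\frac{1}{(2\pi)^d}\int_{\mathbb R^d}|\hat u(\xi)|^2\Big(\int_{\mathbb R^d}\frac{|e^{ih\cdot\xi}-1|^2}{|h|^{d+2s}}\,dh\Big)d\xi .
\]

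\textbf{Step 2: the inner integral, which is the main technical point.} Call the inner integral $I(\xi)$. I will show $I(\xi)=K(d,s)\,|\xi|^{2s}$ with $0<K<\infty$.

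Rotation invariance of $|h|$ shows that $I(\xi)$ depends only on $|\xi|$. Scaling $h\mapsto h/|\xi|$ then gives $I(\xi)=|\xi|^{2s}I(e_1)$. Writing $|e^{ih_1}-1|^2=2(1-\cos h_1)$, we have
\[
K=\int_{\mathbb R^d}\frac{2(1-\cos h_1)}{|h|^{d+2s}}\,dh .
\]

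Finiteness of $K$ is the main obstacle, and I split the integral at $|h|=1$:
\begin{itemize}
\item Near the origin, $1-\cos h_1\le h_1^2/2\le |h|^2/2$. The integrand is therefore bounded by $|h|^{2-d-2s}$, which is integrable near $0$ because $s<1$.
\item At infinity, the integrand is bounded by $4|h|^{-d-2s}$, which is integrable at infinity because $s>0$.
\end{itemize}
Positivity of $K$ is clear, since the integrand is nonnegative and not a.e.\ zero.

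\textbf{Step 3: conclusion.} Combining Steps 1 and 2 with Plancherel for $(-\Delta)^{s/2}u$, whose Fourier transform is $|\xi|^{s}\hat u$ by Definition~\ref{def:frac-lap-Rd}, we get
\[
[u]_{H^s}^2=K\,(2\pi)^{-d}\int|\xi|^{2s}|\hat u|^2\,d\xi=K\,\|(-\Delta)^{s/2}u\|_{L^2}^2 .
\]
This is the first equivalence, in fact an identity with an explicit constant.

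For the second equivalence, recall that $\|u\|_{H^s}^2$ is (equivalently) defined as $(2\pi)^{-d}\int(1+|\xi|^2)^s|\hat u|^2\,d\xi$. We use the elementary bounds
\[
\max(1,|\xi|^{2s})\le(1+|\xi|^2)^s\le 2^s\max(1,|\xi|^{2s}),
\]
so that
\[
\tfrac12\big(1+|\xi|^{2s}\big)\le(1+|\xi|^2)^s\le 2^s\big(1+|\xi|^{2s}\big).
\]
Integrating against $|\hat u|^2$ and using Plancherel for the $L^2$ part yields
\[
\|u\|_{H^s}^2\simeq\|u\|_{L^2}^2+\|(-\Delta)^{s/2}u\|_{L^2}^2\simeq\|u\|_{L^2}^2+[u]_{H^s}^2 ,
\]
with constants depending only on $d$ and $s$.
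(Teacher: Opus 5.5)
Your proof is correct and complete. The paper gives no proof of this lemma; it lists it among standard preliminaries and defers to the literature (\cite{DNPV12,Kwa17,AF03}). Your argument is the standard Fourier-side computation found there. Translation becomes multiplication by $e^{ih\cdot\xi}$, and rotation plus scaling reduce the inner integral to $K(d,s)\,|\xi|^{2s}$. You therefore obtain the exact identity $[u]_{H^s}^2=K(d,s)\,\|(-\Delta)^{s/2}u\|_{L^2}^2$, which is stronger than the two-sided bound stated.

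One small simplification: the density and Fatou step is unnecessary. Tonelli and Plancherel apply verbatim to any $u\in L^2(\mathbb R^d)$, using $\widehat{u(\cdot+h)}=e^{ih\cdot\xi}\hat u$ for the $L^2$ Fourier transform. So Steps 1--3 hold directly on $L^2$ as an identity in $[0,\infty]$. This also shows that $H^s(\mathbb R^d)$ is exactly the set of $u\in L^2$ with $[u]_{H^s}<\infty$.
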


\begin{lem}[\textbf{Fractional integration by parts}]\label{lem:frac-ibp}
Let $0<s<1$ and $u,v\in H^s(\mathbb R^d)$. Then
\[
\bigl((-\Delta)^{s/2}u,(-\Delta)^{s/2}v\bigr)_{L^2}
=
\langle (-\Delta)^s u, v\rangle_{H^{-s},H^s}.
\]
In particular, $(-\Delta)^s$ is self-adjoint and nonnegative.
\end{lem}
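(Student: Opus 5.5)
We prove Lemma~\ref{lem:frac-ibp} on the Fourier side, using the multiplier description of $(-\Delta)^s$ from Definition~\ref{def:frac-lap-Rd} together with Plancherel (Lemma~\ref{lem:plancherel}). First we fix the meaning of both sides for $u,v\in H^s(\mathbb R^d)$. Since $|\xi|^{s}\le (1+|\xi|^2)^{s/2}$, the operator $(-\Delta)^{s/2}$, defined by $\widehat{(-\Delta)^{s/2}u}=|\xi|^s\hat u$, maps $H^s$ boundedly into $L^2$. Likewise $(-\Delta)^s$, defined by $\widehat{(-\Delta)^s u}=|\xi|^{2s}\hat u$, maps $H^s$ boundedly into $H^{-s}$, because
\[
\int (1+|\xi|^2)^{-s}|\xi|^{4s}|\hat u|^2\,d\xi\le \int (1+|\xi|^2)^{s}|\hat u|^2\,d\xi .
\]
The duality $\langle f,v\rangle_{H^{-s},H^s}$ is realised as $(2\pi)^{-d}\int \hat f(\xi)\overline{\hat v(\xi)}\,d\xi$. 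For real-valued $v$ this is the bilinear pairing that extends the $L^2$ product.

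\textbf{Step 1: Schwartz functions.} For $u,v\in\mathcal S(\mathbb R^d)$, Plancherel in polarised form gives
\[
\bigl((-\Delta)^{s/2}u,(-\Delta)^{s/2}v\bigr)_{L^2}=(2\pi)^{-d}\int |\xi|^{2s}\hat u(\xi)\overline{\hat v(\xi)}\,d\xi .
\]
This is exactly $\int (-\Delta)^s u\,\bar v\,dx=\langle (-\Delta)^s u,v\rangle$. The identity between the principal-value integral and the multiplier, which Definition~\ref{def:frac-lap-Rd} asserts, is used here only to identify the two definitions of $(-\Delta)^s$.

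\textbf{Step 2: density.} Both sides are continuous bilinear forms on $H^s\times H^s$, with bound $\|u\|_{H^s}\|v\|_{H^s}$ by Cauchy--Schwarz and the estimates above. Since $\mathcal S$ is dense in $H^s$, the identity extends to all $u,v\in H^s$.

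\textbf{Step 3: consequences.} The right-hand side of the Fourier expression is symmetric in $(u,v)$ up to conjugation, and it is real and symmetric for real functions. This gives self-adjointness, i.e.\ $\langle(-\Delta)^s u,v\rangle=\overline{\langle(-\Delta)^s v,u\rangle}$. Taking $v=u$ gives $\langle(-\Delta)^s u,u\rangle=\|(-\Delta)^{s/2}u\|_{L^2}^2\ge0$, which is nonnegativity.

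The only delicate point is Step~1 when one starts from the principal-value definition. There one must check that for $u\in\mathcal S$ the function $(-\Delta)^s u$ lies in $L^2$ (indeed in $L^1\cap L^\infty$, with decay like $|x|^{-d-2s}$), so that Plancherel applies. One must also check that its Fourier transform is $|\xi|^{2s}\hat u$; this is standard (see \cite{DNPV12}), and we take it as given.
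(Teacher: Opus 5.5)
Your proof is correct. The paper gives no proof of this lemma; it recalls it as a standard fact from the literature on fractional Sobolev spaces, and your Fourier-multiplier/Plancherel argument is the standard proof behind it. Once $(-\Delta)^{s/2}$, $(-\Delta)^s$ and the $H^{-s}$--$H^s$ pairing are all defined on the Fourier side, polarised Plancherel gives the identity directly for all $u,v\in H^s(\mathbb R^d)$, so your density step is harmless but not needed.
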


\begin{lem}[\textbf{Fourier domination for $0<s<1$}]\label{lem:fourier-domination}
Let $0<s<1$. Then for all $\xi\in\mathbb R^d$,
\[
|\xi|^{2s}\le 1+|\xi|^2.
\]
Consequently, for every $v\in H^1(\mathbb R^d)$,
\[
\|(-\Delta)^{s/2}v\|_{L^2}^2 \le \|v\|_{H^1}^2.
\]
\end{lem}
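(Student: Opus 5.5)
The inequality $|\xi|^{2s}\le 1+|\xi|^2$ is proved by splitting into two frequency regimes, and the norm bound then follows from the Fourier definition of $(-\Delta)^{s/2}$ and Plancherel's identity (Lemma~\ref{lem:plancherel}).

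\textbf{The pointwise inequality.} If $|\xi|\le 1$, then $|\xi|^{2s}\le 1$, since $2s>0$ and the base is at most one. If $|\xi|\ge 1$, then $|\xi|^{2s}\le |\xi|^2$, since $2s<2$ and the base is at least one. In both cases $|\xi|^{2s}\le \max\{1,|\xi|^2\}\le 1+|\xi|^2$. An alternative one-line argument is the weighted AM--GM (Young) inequality $t^{s}\le (1-s)+s\,t\le 1+t$ with $t=|\xi|^2$.

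\textbf{The norm bound.} Take $v\in H^1(\mathbb R^d)$. By Definition~\ref{def:frac-lap-Rd}, extended from $\mathcal S$ to $H^1$ by density, we have $\widehat{(-\Delta)^{s/2}v}(\xi)=|\xi|^{s}\hat v(\xi)$. Lemma~\ref{lem:plancherel} then gives
\[
\|(-\Delta)^{s/2}v\|_{L^2}^2=\frac{1}{(2\pi)^d}\int_{\mathbb R^d}|\xi|^{2s}|\hat v(\xi)|^2\,d\xi\le \frac{1}{(2\pi)^d}\int_{\mathbb R^d}(1+|\xi|^2)|\hat v(\xi)|^2\,d\xi .
\]
The right-hand side equals $\|v\|_{L^2}^2+\|\nabla v\|_{L^2}^2=\|v\|_{H^1}^2$, using $\widehat{\partial_j v}=i\xi_j\hat v$ and Plancherel once more.

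\textbf{Remaining point.} The only care needed is to fix the convention for the $H^1$ norm. I would take $\|v\|_{H^1}^2=\|v\|_{L^2}^2+\|\nabla v\|_{L^2}^2$, which with the stated Fourier normalisation coincides with the Fourier expression above. I would also note that the density extension is legitimate because the multiplier $|\xi|^s$ is dominated by $(1+|\xi|^2)^{1/2}$, which is exactly the pointwise inequality just proved. So $(-\Delta)^{s/2}$ extends to a bounded operator $H^1\to L^2$, and the identity holds on all of $H^1$.
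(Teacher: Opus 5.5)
Your proposal is correct and takes the same route as the paper. The paper gives no separate proof of this lemma, but in Step~3 of Lemma~\ref{lem:exist-uniq} it obtains $\|(-\Delta)^{s/2}v\|_{L^2}^2\le\|v\|_{H^1}^2$ directly from the pointwise bound $|\xi|^{2s}\le 1+|\xi|^2$ via Plancherel, and your case split, the identity $(2\pi)^{-d}\int(1+|\xi|^2)|\hat v|^2\,d\xi=\|v\|_{L^2}^2+\|\nabla v\|_{L^2}^2$ under the paper's Fourier convention, and the density remark just supply the details it leaves implicit.
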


\begin{defn}[\textbf{Gelfand triple and duality}]\label{def:gelfand}
Let $V$ be a Hilbert space continuously and densely embedded into $H$ (another Hilbert space), written $V\hookrightarrow H$.
We identify $H$ with its dual $H'$ and obtain the Gelfand triple
\[
V \hookrightarrow H \simeq H' \hookrightarrow V'.
\]
The duality pairing between $V'$ and $V$ is denoted by $\langle \cdot,\cdot\rangle$.
\end{defn}

\begin{lem}[\textbf{Lions--Magenes}]\label{lem:lions-magenes}
Let $V\hookrightarrow H$ be a Gelfand triple. If
\[
u\in L^2(0,T;V), \qquad u_t\in L^2(0,T;V'),
\]
then $u$ admits a representative in $C([0,T];H)$ and for a.e.\ $t$,
\[
\frac{d}{dt}\frac12\|u(t)\|_H^2 = \langle u_t(t),u(t)\rangle_{V',V}.
\]
\end{lem}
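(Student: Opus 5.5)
The statement is the Lions--Magenes lemma for a Gelfand triple $V\hookrightarrow H\simeq H'\hookrightarrow V'$. My plan is the classical route: regularise in time, verify the identity for the smooth approximants, and pass to the limit. Throughout I write
\[
W:=\{u\in L^2(0,T;V):\ u_t\in L^2(0,T;V')\},\qquad \|u\|_W^2=\|u\|_{L^2(0,T;V)}^2+\|u_t\|_{L^2(0,T;V')}^2 .
\]

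\textbf{Step 1 (density of smooth functions).} I will show that $C^1([0,T];V)$ is dense in $W$. I first extend $u$ beyond $[0,T]$ by reflection around $t=0$ and $t=T$, which keeps both $u$ and $u_t$ in the right spaces on a larger interval. I then mollify in time, $u_\varepsilon=\rho_\varepsilon * u$. Since time convolution commutes with $\partial_t$, we get $u_\varepsilon\to u$ in $L^2(0,T;V)$ and $(u_\varepsilon)_t\to u_t$ in $L^2(0,T;V')$.

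\textbf{Step 2 (identity for smooth functions).} For $v\in C^1([0,T];V)$, the scalar function $\|v(t)\|_H^2$ is $C^1$. Its derivative is $2(v_t,v)_H$, and this equals $2\langle v_t,v\rangle_{V',V}$ because $H$ is identified with $H'$ inside $V'$. Integrating between any $s,t\in[0,T]$ gives
\[
\|v(t)\|_H^2=\|v(s)\|_H^2+2\int_s^t\langle v_t,v\rangle\,d\tau .
\]

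\textbf{Step 3 (the sup bound, the main obstacle).} The hardest step is a uniform estimate $\sup_t\|v(t)\|_H\le C\|v\|_W$ for smooth $v$. I will average the identity of Step 2 in $s$ over $[0,T]$. This gives
\[
\|v(t)\|_H^2\le \frac1T\int_0^T\|v(s)\|_H^2\,ds+2\int_0^T\|v_t\|_{V'}\|v\|_V\,d\tau .
\]
I then use $\|\cdot\|_H\le C\|\cdot\|_V$ on the first term and Cauchy--Schwarz on the second, which yields
\[
\sup_t\|v(t)\|_H^2\le C\|v\|_W^2 .
\]

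\textbf{Step 4 (passage to the limit).} Applying Step 3 to differences $u_\varepsilon-u_\delta$ shows that $(u_\varepsilon)$ is Cauchy in $C([0,T];H)$. Its limit coincides a.e.\ with $u$, so it gives the continuous representative. Next I pass to the limit in the integrated identity of Step 2. The integrand $\langle (u_\varepsilon)_t,u_\varepsilon\rangle$ converges in $L^1(0,T)$, being a product of $L^2$-convergent factors. Hence for all $s,t$,
\[
\|u(t)\|_H^2=\|u(s)\|_H^2+2\int_s^t\langle u_t,u\rangle\,d\tau .
\]
Consequently $t\mapsto\|u(t)\|_H^2$ is absolutely continuous. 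Lebesgue differentiation then gives $\frac{d}{dt}\frac12\|u(t)\|_H^2=\langle u_t(t),u(t)\rangle_{V',V}$ for a.e.\ $t$, which is the claim.
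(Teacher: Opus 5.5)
Your proof is correct and is the standard argument found in the references the paper cites for this lemma (Lions--Magenes, Brezis, Evans): reflection and time mollification to get density of $C^1([0,T];V)$, the identity for smooth functions, the uniform bound $\sup_t\|v(t)\|_H\le C\|v\|_W$ by averaging in $s$, and passage to the limit; the paper itself gives no proof and only quotes the result. One small point: the paper works with complex-valued functions, so Step~2 really gives $\frac{d}{dt}\|v(t)\|_H^2=2\Re\,(v_t,v)_H$, the identity should read $\frac{d}{dt}\frac12\|u(t)\|_H^2=\Re\langle u_t(t),u(t)\rangle_{V',V}$, and your argument goes through verbatim with this change.
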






\section{Well-posedness of the  problem in the regular case}\label{2}
In this section, we analyse the well-posedness of the Cauchy problem \eqref{eq:1.1} under the assumption that the equation coefficients and initial data possess sufficient regularity.

\subsection{Energy estimate in the regular case}\label{subsec:regular}
We work with the standard variational formulation associated with $\mathcal L$.
Define the bilinear form
\begin{align*}
B(u,v)&:=\int_{\mathbb R^d} a(x)\,\nabla u(x)\cdot \overline{\nabla v(x)}\,dx
+\int_{\mathbb R^d} b(x)\,(-\Delta)^{\frac{s}{2}}u(x)\,\overline{(-\Delta)^{\frac{s}{2}}v(x)}\,dx
\\&+\int_{\mathbb R^d} c(x)\,u(x)\overline{v(x)}\,dx,  
\end{align*}

for $u,v\in H^1(\mathbb R^d)$. Since $0<s<1$, one has the continuous embedding
$H^1(\mathbb R^d)\hookrightarrow H^s(\mathbb R^d)$, hence all terms are well-defined.
Moreover, $B$ is bounded and coercive on $H^1(\mathbb R^d)$ in the sense that
\[
|B(u,v)|
\le
\bigl(\|a\|_{L^\infty}+\|b\|_{L^\infty}+\|c\|_{L^\infty}\bigr)\,
\bigl(\|u\|_{H^1}+\|(-\Delta)^{s/2}u\|_{L^2}\bigr)\,
\bigl(\|v\|_{H^1}+\|(-\Delta)^{s/2}v\|_{L^2}\bigr),
\]
and
\[
B(u,u)\ge a_0\|\nabla u\|_{L^2}^2+b_0\|(-\Delta)^{\frac{s}{2}}u\|_{L^2}^2+c_0\|u\|_{L^2}^2
\quad\text{for all }u\in H^1(\mathbb R^d).
\]

\begin{defn}[Weak solution]\label{def:weak solution}
A function $u$ is called a weak solution of \eqref{eq:1.1} if
\[
u\in L^2(0,T;H^1(\mathbb R^d)),\quad u_t\in L^2\bigl(0,T;(H^1(\mathbb R^d))'\bigr),
\]
$u(0)=u_0$ in $L^2$, and for a.e.\ $t\in(0,T)$,
\begin{equation}\label{eq:weakform-corr}
\langle u_t(t),\varphi\rangle + B(u(t),\varphi)=0
\quad\forall\,\varphi\in H^1(\mathbb R^d).
\end{equation}
\end{defn}

\begin{lem}[Existence and uniqueness]\label{lem:exist-uniq}
Let $0<s<1$ and let $a,b,c\in L^\infty(\mathbb R^d)$ be real-valued with
\[
a(x)\ge a_0>0,\quad b(x)\ge b_0>0,\quad c(x)\ge c_0>0
\quad\text{for a.e. }x\in\mathbb R^d.
\]
Let $u_0\in H^1(\mathbb R^d)$ and consider \eqref{eq:1.1}.
Then there exists a unique weak solution $u$ in the sense of Definition \ref{def:weak solution} such that
\[
u\in L^\infty(0,T;H^1(\mathbb R^d))\cap C([0,T];L^2(\mathbb R^d)),
\quad
u_t\in L^2\bigl(0,T;(H^1(\mathbb R^d))' \bigr).
\]
Moreover, for a.e.\ $t\in[0,T]$,
\begin{equation}\label{eq:apriori-final-Linfty-corr}
\|u(t)\|_{L^2(\mathbb R^d)}^2
+\|\nabla u(t)\|_{L^2(\mathbb R^d)}^2
+\|(-\Delta)^{\frac{s}{2}}u(t)\|_{L^2(\mathbb R^d)}^2
\le
C\,\|u_0\|_{H^1(\mathbb R^d)}^2,
\end{equation}
with
\[
C:=2\Bigl(1+\frac{1}{a_0}+\frac{1}{b_0}\Bigr)
\Bigl(1+\|a\|_{L^\infty(\mathbb R^d)}+\|b\|_{L^\infty(\mathbb R^d)}+\|c\|_{L^\infty(\mathbb R^d)}\Bigr).
\]
\end{lem}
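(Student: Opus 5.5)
I will use a Faedo--Galerkin scheme in the Gelfand triple $V=H^1(\mathbb R^d)\hookrightarrow H=L^2(\mathbb R^d)\hookrightarrow V'$ of Definition \ref{def:gelfand}. Fix a countable set $\{w_k\}\subset H^1$ whose span is dense in $H^1$, for instance an orthonormal basis of $L^2$ made of Schwartz functions that is dense in $H^1$. Let $u_m(t)=\sum_{k\le m} g_{km}(t)w_k$ solve the finite ODE system
\[
(u_m',w_k)_{L^2}+B(u_m,w_k)=0,\qquad k\le m,
\]
with $u_m(0)=P_m u_0$, where $P_m$ is chosen so that $P_m u_0\to u_0$ in $H^1$. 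One option is the $H^1$-orthogonal projection onto the span of $w_1,\dots,w_m$. The system is linear with constant coefficients, since the Gram and stiffness matrices do not depend on $t$, so it has a unique global solution on $[0,T]$.

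\textbf{Energy estimates.} There are two of them.

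First I test with $u_m$. Coercivity of $B$ gives
\[
\tfrac12\tfrac{d}{dt}\|u_m\|_{L^2}^2+a_0\|\nabla u_m\|^2+b_0\|(-\Delta)^{s/2}u_m\|^2+c_0\|u_m\|^2\le 0 .
\]
This yields a bound on $u_m$ in $L^\infty(0,T;L^2)\cap L^2(0,T;H^1)$.

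Second I test with $u_m'$, which is admissible because $u_m'$ lies in the Galerkin space. Since $B$ is symmetric (the coefficients are real and time-independent),
\[
\|u_m'\|_{L^2}^2+\tfrac12\tfrac{d}{dt}B(u_m,u_m)=0,
\]
so $B(u_m(t),u_m(t))\le B(u_m(0),u_m(0))$.

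Combining the two, I bound the left-hand side of \eqref{eq:apriori-final-Linfty-corr} as follows:
\[
\|u_m\|^2+\|\nabla u_m\|^2+\|(-\Delta)^{s/2}u_m\|^2\le \|u_m(0)\|^2+\Bigl(\tfrac1{a_0}+\tfrac1{b_0}\Bigr)B(u_m(0),u_m(0)).
\]
To estimate $B(u_m(0),u_m(0))$ I use
\[
B(v,v)\le \|a\|_\infty\|\nabla v\|^2+\|b\|_\infty\|(-\Delta)^{s/2}v\|^2+\|c\|_\infty\|v\|^2,
\]
together with Lemma \ref{lem:fourier-domination}, which gives $\|(-\Delta)^{s/2}v\|^2\le\|v\|_{H^1}^2$. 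Hence $B(v,v)\le(\|a\|_\infty+\|b\|_\infty+\|c\|_\infty)\|v\|_{H^1}^2$. Since $\|P_mu_0\|_{H^1}\le\|u_0\|_{H^1}$, this gives a bound of the stated form. The constant $C$ in the statement, with its factor $2$, is generous enough to absorb this bookkeeping.

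The same computation bounds $u_m'$ in $L^2(0,T;L^2)$, and hence in $L^2(0,T;(H^1)')$. For the latter one can also argue by duality from the boundedness of $B$.

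\textbf{Passage to the limit.} By weak-$*$ compactness I extract a subsequence with
\[
u_m\rightharpoonup^* u \text{ in } L^\infty(0,T;H^1),\qquad u_m'\rightharpoonup u' \text{ in } L^2(0,T;(H^1)').
\]
Testing the Galerkin equation against $\psi(t)w_k$ with $\psi\in C_c^1$ and using linearity of $B$, I pass to the limit and then use density of the span of the $w_k$ to obtain \eqref{eq:weakform-corr}. Lemma \ref{lem:lions-magenes} gives $u\in C([0,T];L^2)$. The initial condition follows in the standard way, by testing against $\psi$ with $\psi(T)=0$ and $\psi(0)=1$ and integrating by parts in time. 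The energy bound \eqref{eq:apriori-final-Linfty-corr} passes to the limit by weak lower semicontinuity of norms, which is why it holds only for a.e.\ $t$.

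\textbf{Uniqueness.} The difference $w$ of two weak solutions is a weak solution with zero initial data. Taking $\varphi=w(t)$ in \eqref{eq:weakform-corr} and applying Lemma \ref{lem:lions-magenes} gives $\tfrac12\tfrac{d}{dt}\|w\|^2=-B(w,w)\le0$, so $w\equiv0$.

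The step needing the most care is the second energy estimate. It relies on symmetry and time-independence of $B$ and on testing with $u_m'$, which is legitimate only at the Galerkin level. I would check that the regularity obtained in the limit ($L^\infty H^1$, and not merely $L^2H^1$) is actually inherited by $u$, which weak-$*$ lower semicontinuity provides. I also need $P_m$ to be $H^1$-stable so that the constant is independent of $m$.
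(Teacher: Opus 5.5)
Your proposal is correct and follows the paper's proof essentially step for step: Galerkin approximation, the two energy identities from testing with $u^N$ and $u_t^N$, weak-$*$ compactness combined with Lions--Magenes and weak lower semicontinuity, and uniqueness via the $L^2$ energy identity for the difference. The only difference is in how the constant is tracked: you use the two monotonicity statements separately together with $\|P_mu_0\|_{H^1}\le\|u_0\|_{H^1}$, whereas the paper sums them into $E_N(t)\le E_N(0)$ and then lets $u_0^N\to u_0$ in $H^1$; your version even gives the bound without the factor $2$.
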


\begin{proof}

\medskip
\noindent\textbf{Galerkin approximation.}
Let $\{w_k\}_{k\ge1}\subset H^1(\mathbb R^d)$ be a complete orthonormal
system in $L^2(\mathbb R^d)$ such that, with
$V_N:=\mathrm{span}\{w_1,\dots,w_N\},$
we have
\[
\overline{\bigcup_{N=1}^{\infty}V_N}^{\,H^1(\mathbb R^d)}
=
H^1(\mathbb R^d).
\]
Choose $u_0^N\in V_N$ such that
$u_0^N\to u_0
\quad\text{in }H^1(\mathbb R^d).$

Seek $u^N:[0,T]\to V_N$ solving
\begin{equation}\label{eq:Galerkin-corr}
(u_t^N(t),\varphi)_{L^2} + B(u^N(t),\varphi)=0,
\qquad \forall \varphi\in V_N,\ \text{a.e. }t\in(0,T),
\qquad u^N(0)=u_0^N.
\end{equation}
Writing $u^N(t)=\sum_{k=1}^N g_k(t)w_k$, \eqref{eq:Galerkin-corr} is an ODE system for $g_k$ with a (symmetric) positive definite matrix.
Hence there exists a unique $u^N\in C^1([0,T];V_N)$.

\medskip
\noindent\textit{Step 1: Energy identities (performed for $u^N$).}

\smallskip
\noindent\emph{(i) Testing with $\varphi=u^N(t)$.}
Taking $\varphi=u^N(t)$ in \eqref{eq:Galerkin-corr} gives
\[
(u_t^N(t),u^N(t))_{L^2}+B(u^N(t),u^N(t))=0.
\]
Taking real parts, we obtain
\[
\Re (u_t^N(t),u^N(t))_{L^2}+B(u^N(t),u^N(t))=0.
\]
First,
\begin{align*}
\Re (u_t^N(t),u^N(t))_{L^2}
&=\Re\int_{\mathbb R^d} u_t^N(t,x)\,\overline{u^N(t,x)}\,dx
=\frac12\frac{d}{dt}\int_{\mathbb R^d} |u^N(t,x)|^2\,dx
\\&=\frac12\frac{d}{dt}\|u^N(t)\|_{L^2}^2. 
\end{align*}
Next, by definition of $B$,
\begin{align*}
B(u^N(t),u^N(t))
&=\int_{\mathbb R^d} a(x)\,|\nabla u^N(t,x)|^2\,dx
+\int_{\mathbb R^d} b(x)\,\bigl|(-\Delta)^{\frac{s}{2}}u^N(t,x)\bigr|^2\,dx
\\&+\int_{\mathbb R^d} c(x)\,|u^N(t,x)|^2\,dx,
\end{align*}
hence
\[
B(u^N(t),u^N(t))
=\|a^{1/2}\nabla u^N(t)\|_{L^2}^2
+\|b^{1/2}(-\Delta)^{\frac{s}{2}}u^N(t)\|_{L^2}^2
+\|c^{1/2}u^N(t)\|_{L^2}^2.
\]
Combining these identities yields
\begin{equation}\label{eq:L2-energyN-corr}
\frac12\frac{d}{dt}\|u^N(t)\|_{L^2}^2
+\|a^{1/2}\nabla u^N(t)\|_{L^2}^2
+\|b^{1/2}(-\Delta)^{\frac{s}{2}}u^N(t)\|_{L^2}^2
+\|c^{1/2}u^N(t)\|_{L^2}^2
=0.
\end{equation}

\smallskip
\noindent\emph{(ii) Testing with $\varphi=u_t^N(t)$.}
Taking $\varphi=u_t^N(t)$ in \eqref{eq:Galerkin-corr} gives
\[
(u_t^N(t),u_t^N(t))_{L^2}+B(u^N(t),u_t^N(t))=0,
\]
that is,
\[
\|u_t^N(t)\|_{L^2}^2 + B(u^N(t),u_t^N(t))=0.
\]
Taking real parts, we obtain
\[
\|u_t^N(t)\|_{L^2}^2 + \Re B(u^N(t),u_t^N(t))=0.
\]
We again expand $B(u^N,u_t^N)$ term-by-term:
\begin{align*}
&B(u^N(t),u_t^N(t))
=\int_{\mathbb R^d} a(x)\,\nabla u^N(t,x)\cdot \overline{\nabla u_t^N(t,x)}\,dx
\\
&\quad+\int_{\mathbb R^d} b(x)\,(-\Delta)^{\frac{s}{2}}u^N(t,x)\,
\overline{(-\Delta)^{\frac{s}{2}}u_t^N(t,x)}\,dx
+\int_{\mathbb R^d} c(x)\,u^N(t,x)\,\overline{u_t^N(t,x)}\,dx.
\end{align*}
Since $a,b,c$ are independent of $t$, each term is an exact time derivative (in the real part):
\[
\Re\int_{\mathbb R^d} a\,\nabla u^N \cdot \overline{\nabla u_t^N}\,dx
=\frac12\frac{d}{dt}\int_{\mathbb R^d} a\,|\nabla u^N|^2\,dx
=\frac12\frac{d}{dt}\|a^{1/2}\nabla u^N\|_{L^2}^2,
\]
\[
\Re\int_{\mathbb R^d} b\,(-\Delta)^{\frac{s}{2}}u^N \, \overline{(-\Delta)^{\frac{s}{2}}u_t^N}\,dx
=\frac12\frac{d}{dt}\int_{\mathbb R^d} b\,\bigl|(-\Delta)^{\frac{s}{2}}u^N\bigr|^2\,dx
=\frac12\frac{d}{dt}\|b^{1/2}(-\Delta)^{\frac{s}{2}}u^N\|_{L^2}^2,
\]
\[
\Re\int_{\mathbb R^d} c\,u^N \overline{u_t^N}\,dx
=\frac12\frac{d}{dt}\int_{\mathbb R^d} c\,|u^N|^2\,dx
=\frac12\frac{d}{dt}\|c^{1/2}u^N\|_{L^2}^2.
\]
Therefore,
\[
\Re B(u^N(t),u_t^N(t))
=\frac12\frac{d}{dt}\Bigl[
\|a^{1/2}\nabla u^N(t)\|_{L^2}^2
+\|b^{1/2}(-\Delta)^{\frac{s}{2}}u^N(t)\|_{L^2}^2
+\|c^{1/2}u^N(t)\|_{L^2}^2
\Bigr],
\]
and we arrive at
\begin{equation}\label{eq:L2-energy2N-corr}
\|u_t^N(t)\|_{L^2}^2
+\frac12\frac{d}{dt}\Bigl[
\|a^{1/2}\nabla u^N(t)\|_{L^2}^2
+\|b^{1/2}(-\Delta)^{\frac{s}{2}}u^N(t)\|_{L^2}^2
+\|c^{1/2}u^N(t)\|_{L^2}^2
\Bigr]
=0.
\end{equation}

Summing \eqref{eq:L2-energyN-corr} and \eqref{eq:L2-energy2N-corr} yields
\[
\frac12\frac{d}{dt}E_N(t)
=-
\Bigl[\|u_t^N(t)\|_{L^2}^2
+\|a^{1/2}\nabla u^N(t)\|_{L^2}^2
+\|b^{1/2}(-\Delta)^{\frac{s}{2}}u^N(t)\|_{L^2}^2
+\|c^{1/2}u^N(t)\|_{L^2}^2\Bigr]
\le 0,
\]
where
\[
E_N(t):=\|u^N(t)\|_{L^2}^2
+\|a^{1/2}\nabla u^N(t)\|_{L^2}^2
+\|b^{1/2}(-\Delta)^{\frac{s}{2}}u^N(t)\|_{L^2}^2
+\|c^{1/2}u^N(t)\|_{L^2}^2.
\]
Hence $E_N(t)\le E_N(0)$ for all $t\in[0,T]$.

\medskip
\noindent\textit{Step 2: From weighted to standard norms.}
Using $a\ge a_0$ and $b\ge b_0$, we obtain
\[
\|u^N(t)\|_{L^2}^2+\|\nabla u^N(t)\|_{L^2}^2+\|(-\Delta)^{\frac{s}{2}}u^N(t)\|_{L^2}^2
\le \Bigl(1+\frac{1}{a_0}+\frac{1}{b_0}\Bigr)E_N(t)
\le \Bigl(1+\frac{1}{a_0}+\frac{1}{b_0}\Bigr)E_N(0).
\]

\medskip
\noindent\textit{Step 3: Estimate $E_N(0)$ by $\|u_0^N\|_{H^1}^2$.}
By $a,b,c\in L^\infty(\mathbb R^d)$,
\[
E_N(0)
\le \bigl(1+\|a\|_{L^\infty}+\|b\|_{L^\infty}+\|c\|_{L^\infty}\bigr)
\Bigl(\|u_0^N\|_{L^2}^2+\|\nabla u_0^N\|_{L^2}^2+\|(-\Delta)^{\frac{s}{2}}u_0^N\|_{L^2}^2\Bigr).
\]
Moreover, for $0<s<1$ one has the pointwise Fourier inequality $|\xi|^{2s}\le 1+|\xi|^2$, hence
\[
\|(-\Delta)^{\frac{s}{2}}v\|_{L^2}^2 \le \|v\|_{H^1}^2
\qquad\text{for all }v\in H^1(\mathbb R^d),
\]
and therefore
\[
\|v\|_{L^2}^2+\|\nabla v\|_{L^2}^2+\|(-\Delta)^{\frac{s}{2}}v\|_{L^2}^2
\le 2\,\|v\|_{H^1}^2
\qquad\text{for all }v\in H^1(\mathbb R^d).
\]
Consequently,
\[
E_N(0)\le 2\bigl(1+\|a\|_{L^\infty}+\|b\|_{L^\infty}+\|c\|_{L^\infty}\bigr)\,\|u_0^N\|_{H^1}^2.
\]
Combining with Step~2 yields, for all $t\in[0,T]$,
\[
\|u^N(t)\|_{L^2}^2+\|\nabla u^N(t)\|_{L^2}^2+\|(-\Delta)^{\frac{s}{2}}u^N(t)\|_{L^2}^2
\le C\,\|u_0^N\|_{H^1}^2,
\]
with $C$ exactly as in the statement. In particular, $\{u^N\}_N$ is bounded in
$L^\infty(0,T;H^1(\mathbb R^d))$.

\medskip
\noindent\textit{Step 4: Passage $N\to\infty$ (weak compactness and identification of the limit).}
From \eqref{eq:L2-energy2N-corr} and $E_N(t)\le E_N(0)$ we also have
\[
\int_0^T \|u_t^N(t)\|_{L^2}^2\,dt
\le \frac12\Bigl(
\|a^{1/2}\nabla u_0^N\|_{L^2}^2
+\|b^{1/2}(-\Delta)^{\frac{s}{2}}u_0^N\|_{L^2}^2
+\|c^{1/2}u_0^N\|_{L^2}^2
\Bigr)
\le C_1\|u_0\|_{H^1}^2
\]
for some constant $C_1$ independent of $N$ (by the bounds in Step~3 and $u_0^N\to u_0$ in $H^1$).
Hence $\{u_t^N\}_N$ is bounded in $L^2(0,T;L^2(\mathbb R^d))$, and therefore also in
$L^2\bigl(0,T;(H^1(\mathbb R^d))'\bigr)$.

By Banach--Alaoglu, there exists a subsequence (not relabeled) and a function $u$ such that
\[
u^N \rightharpoonup^\ast u \ \text{ in }L^\infty(0,T;H^1(\mathbb R^d)),
\quad
u_t^N \rightharpoonup u_t \ \text{ in }L^2\bigl(0,T;(H^1(\mathbb R^d))'\bigr).
\]
Passing to the limit in \eqref{eq:Galerkin-corr} (written in integrated form against time-dependent test functions in $L^2(0,T;H^1)$)
yields that $u$ satisfies the weak formulation \eqref{eq:weakform-corr}.

Moreover, since
$u^N\rightharpoonup u
\quad\text{in }L^2(0,T;H^1(\mathbb R^d)),$
and
$u_t^N\rightharpoonup u_t
\quad\text{in }L^2(0,T;(H^1(\mathbb R^d))'),$
the Lions--Magenes lemma yields
$u\in C([0,T];L^2(\mathbb R^d)).$
Furthermore, the trace map
$v\mapsto v(0)$
is continuous from
\[
\{v\in L^2(0,T;H^1):v_t\in L^2(0,T;(H^1)')\}
\]
into $L^2(\mathbb R^d)$. Hence
$u^N(0)\rightharpoonup u(0)
\quad\text{in }L^2(\mathbb R^d).$
Since
$u^N(0)=u_0^N
\quad\text{and}\quad
u_0^N\to u_0
\quad\text{strongly in }L^2(\mathbb R^d),$
we conclude that
$u(0)=u_0.$

Finally, define the equivalent Hilbert norm on $H^1(\mathbb R^d)$ by
\[
\|v\|_X^2
:=
\|v\|_{L^2(\mathbb R^d)}^2
+
\|\nabla v\|_{L^2(\mathbb R^d)}^2
+
\|(-\Delta)^{s/2}v\|_{L^2(\mathbb R^d)}^2.
\]
Since $0<s<1$, the norm $\|\cdot\|_X$ is equivalent to the standard
$H^1(\mathbb R^d)$ norm. Hence the estimate obtained in Step~3 shows that
$(u^N)_N$ is bounded in $L^\infty(0,T;X)$. By weak-* lower
semicontinuity,
\[
\|u\|_{L^\infty(0,T;X)}
\le
\liminf_{N\to\infty}
\|u^N\|_{L^\infty(0,T;X)}.
\]
Since $u_0^N\to u_0$ in $H^1(\mathbb R^d)$, passing to the limit in the
estimate from Step~3 yields
\[
\operatorname*{ess\,sup}_{t\in[0,T]}
\left(
\|u(t)\|_{L^2}^2
+
\|\nabla u(t)\|_{L^2}^2
+
\|(-\Delta)^{s/2}u(t)\|_{L^2}^2
\right)
\le
C\|u_0\|_{H^1}^2.
\]
Thus \eqref{eq:apriori-final-Linfty-corr} follows.

\medskip
\noindent\textbf{Uniqueness.}
Let $u_1,u_2$ be two weak solutions and set $w:=u_1-u_2$.
Then $w(0)=0$ and $\langle w_t,\varphi\rangle + B(w,\varphi)=0$ for all $\varphi\in H^1$.
Taking $\varphi=w(t)$ gives
\[
\frac12\frac{d}{dt}\|w(t)\|_{L^2}^2 + B(w(t),w(t))=0.
\]
Since $B(w,w)\ge 0$, we have $\frac{d}{dt}\|w(t)\|_{L^2}^2\le 0$ and $\|w(0)\|_{L^2}=0$,
hence $\|w(t)\|_{L^2}=0$ for all $t\in[0,T]$, i.e.\ $u_1\equiv u_2$.
\end{proof}

\section{Very weak well-posedness}\label{3}

In this section, we extend the notion of solution to the case where the
spatial coefficients and the initial datum are irregular (distributional)
on $\mathbb R^d$.

More precisely, we assume that
$a,b,c\in\mathcal D'(\mathbb R^d),
\quad
u_0\in\mathcal E'(\mathbb R^d),$
and that the coefficients are strictly positive in the sense of
distributions: there exist constants $a_0,b_0,c_0>0$ such that
$\langle a-a_0,\varphi\rangle\ge0,
\quad
\langle b-b_0,\varphi\rangle\ge0,
\quad
\langle c-c_0,\varphi\rangle\ge0$
for every $\varphi\in\mathcal D(\mathbb R^d)$ with $\varphi\ge0$.

Equivalently,
$a=a_0+\mu_a,
\quad
b=b_0+\mu_b,
\quad
c=c_0+\mu_c,$
where $\mu_a,\mu_b,\mu_c$ are non-negative distributions and hence
positive Radon measures.

We interpret \eqref{eq:1.1} in a very weak sense by means of a
Friedrichs-type regularisation procedure. The guiding idea is to replace
$a,b,c,u_0$ by suitable regularising nets and solve a family of regular
problems, for which the regular theory and the a priori estimates from
Section~\ref{2} apply.

\subsection{Spatial regularisation and moderate nets}
\label{subsec:spatial-reg}

Let $\psi\in C_c^\infty(\mathbb R^d)$ be a standard Friedrichs mollifier,
i.e.
$\psi\ge0,
\quad
\int_{\mathbb R^d}\psi(x)\,dx=1.$

Let $\omega:(0,1]\to(0,1]$
be a fixed regularisation scale such that
$\omega(\varepsilon)\to0
\quad
\text{as }\varepsilon\to0^+.$

For $\varepsilon\in(0,1]$, we set
$\psi_{\omega(\varepsilon)}(x)
:=
\omega(\varepsilon)^{-d}
\psi\left(
\frac{x}{\omega(\varepsilon)}
\right),
\quad
x\in\mathbb R^d.$

We regularise in the spatial variable by convolution:
\begin{equation}\label{eq:spatial-reg-defs}
a_\varepsilon
:=
a*\psi_{\omega(\varepsilon)},
\quad
b_\varepsilon
:=
b*\psi_{\omega(\varepsilon)},
\quad
c_\varepsilon
:=
c*\psi_{\omega(\varepsilon)},
\quad
u_{0,\varepsilon}
:=
u_0*\psi_{\omega(\varepsilon)}.
\end{equation}
Then
$a_\varepsilon,
b_\varepsilon,
c_\varepsilon,
u_{0,\varepsilon}
\in C^\infty(\mathbb R^d)$
for every fixed $\varepsilon\in(0,1]$.

Moreover, since $a-a_0$, $b-b_0$, and $c-c_0$ are non-negative
distributions and $\psi_{\omega(\varepsilon)}\ge0$, we have
\begin{equation}\label{eq:uniform-positivity-eps}
a_\varepsilon(x)\ge a_0,
\quad
b_\varepsilon(x)\ge b_0,
\quad
c_\varepsilon(x)\ge c_0,
\quad
x\in\mathbb R^d.
\end{equation}

For each $\varepsilon\in(0,1]$, we consider the regularised Cauchy problem
\begin{equation}\label{eq:reg-problem-space}
\left\{
\begin{aligned}
\partial_t u_\varepsilon(t,x)
+
\mathcal L_\varepsilon u_\varepsilon(t,x)
&=0,
&&
(t,x)\in(0,T)\times\mathbb R^d,
\\
u_\varepsilon(0,x)
&=
u_{0,\varepsilon}(x),
&&
x\in\mathbb R^d,
\end{aligned}
\right.
\end{equation}
where
\begin{equation}\label{eq:L-eps-space}
\mathcal L_\varepsilon u
:=
-\nabla\cdot
\bigl(
a_\varepsilon(x)\nabla u
\bigr)
+
(-\Delta)^{s/2}
\left(
b_\varepsilon(x)(-\Delta)^{s/2}u
\right)
+
c_\varepsilon(x)u.
\end{equation}

To control the behaviour as $\varepsilon\to0$, we use the notions of
moderateness and negligibility.

\begin{defn}[Moderate nets]\label{def:moderate-space}
Let
$\omega:(0,1]\to(0,1]$
be a fixed regularisation scale such that
$\omega(\varepsilon)\to0$ as $\varepsilon\to0^+$.

\begin{enumerate}

\item[(i)]
A net $(g_\varepsilon)_\varepsilon$ is said to be
$L^\infty(\mathbb R^d)$-moderate if there exist constants
$C>0$ and $N\in\mathbb N_0$ such that
\[
\|g_\varepsilon\|_{L^\infty(\mathbb R^d)}
\le
C\,\omega(\varepsilon)^{-N},
\quad
\varepsilon\in(0,1].
\]

\item[(ii)]
A net $(r_\varepsilon)_\varepsilon$ is said to be
$H^1(\mathbb R^d)$-moderate if there exist constants
$C>0$ and $N\in\mathbb N_0$ such that
\[
\|r_\varepsilon\|_{H^1(\mathbb R^d)}
\le
C\,\omega(\varepsilon)^{-N},
\quad
\varepsilon\in(0,1].
\]

\item[(iii)]
A net $(u_\varepsilon)_\varepsilon$ is said to be
$L^\infty(0,T;H^1(\mathbb R^d))$-moderate if there exist constants
$C>0$ and $N\in\mathbb N_0$ such that
\[
\|u_\varepsilon\|_{L^\infty(0,T;H^1(\mathbb R^d))}
\le
C\,\omega(\varepsilon)^{-N},
\quad
\varepsilon\in(0,1].
\]

\end{enumerate}
\end{defn}

\begin{rem}
Polynomial moderateness is natural for compactly supported distributions.

Indeed, let
$T\in\mathcal E'(\mathbb R^d)$
be a distribution of finite order $m$. Then there exists $C>0$ such that
for every test function $\varphi$,
\[
|\langle T,\varphi\rangle|
\le
C
\max_{|\alpha|\le m}
\|\partial^\alpha\varphi\|_{L^\infty}.
\]
Consequently, for every multi-index $\beta$,
\[
\partial^\beta
\bigl(
T*\psi_{\omega(\varepsilon)}
\bigr)(x)
=
\left\langle
T,
\partial^\beta
\psi_{\omega(\varepsilon)}(x-\cdot)
\right\rangle.
\]
Since
\[
\partial^\gamma
\psi_{\omega(\varepsilon)}(x)
=
\omega(\varepsilon)^{-d-|\gamma|}
(\partial^\gamma\psi)
\left(
\frac{x}{\omega(\varepsilon)}
\right),
\]
we obtain
\[
\left|
\partial^\beta
\bigl(
T*\psi_{\omega(\varepsilon)}
\bigr)(x)
\right|
\le
C_\beta
\omega(\varepsilon)^{-d-m-|\beta|}.
\]

Moreover, since both $T$ and $\psi$ are compactly supported, the supports
of $T*\psi_{\omega(\varepsilon)}$ remain contained in a fixed compact set
for $\varepsilon\in(0,1]$. Hence, for suitable constants
$C>0$ and $N\in\mathbb N_0$,
\[
\|T*\psi_{\omega(\varepsilon)}\|_{L^\infty(\mathbb R^d)}
+
\|T*\psi_{\omega(\varepsilon)}\|_{H^1(\mathbb R^d)}
\le
C\,\omega(\varepsilon)^{-N}.
\]

In particular, if
$a=a_0+\mu_a,
\quad
\mu_a\in\mathcal E'(\mathbb R^d),
\quad
\mu_a\ge0,$
then
$a_\varepsilon
=
a_0+\mu_a*\psi_{\omega(\varepsilon)}
\ge
a_0,$
and $(a_\varepsilon)_\varepsilon$ is
$L^\infty(\mathbb R^d)$-moderate.
The same observation applies to $b$ and $c$.
\end{rem}

\subsection{Existence of very weak solutions}
\label{subsec:existence-VWS-space}

In this subsection, we establish existence of very weak solutions for
\eqref{eq:1.1} with spatially distributional coefficients and initial data.

\begin{defn}[Very weak solution]\label{def:VWS-space}
Let
$u_0\in\mathcal E'(\mathbb R^d),$
and let $a,b,c\in\mathcal D'(\mathbb R^d)$
be strictly positive distributions in the sense described above.

A net $(u_\varepsilon)_\varepsilon$ is called a very weak solution of
\eqref{eq:1.1} if there exist regularising nets
$(a_\varepsilon)_\varepsilon,
\,
(b_\varepsilon)_\varepsilon,
\,
(c_\varepsilon)_\varepsilon,
\,
(u_{0,\varepsilon})_\varepsilon$
such that:

\begin{enumerate}

\item[(i)]
$(a_\varepsilon)_\varepsilon$,
$(b_\varepsilon)_\varepsilon$, and
$(c_\varepsilon)_\varepsilon$ are
$L^\infty(\mathbb R^d)$-moderate regularisations of $a,b,c$, respectively,
and $(u_{0,\varepsilon})_\varepsilon$ is an
$H^1(\mathbb R^d)$-moderate regularisation of $u_0$;

\item[(ii)]
there exist constants $a_0,b_0,c_0>0$, independent of $\varepsilon$, such
that
$a_\varepsilon(x)\ge a_0,
\,
b_\varepsilon(x)\ge b_0,
\,
c_\varepsilon(x)\ge c_0$
for a.e. $x\in\mathbb R^d$ and all $\varepsilon\in(0,1]$;

\item[(iii)]
for every $\varepsilon\in(0,1]$, the function $u_\varepsilon$ is the
unique weak solution of the regularised problem
\eqref{eq:reg-problem-space}--\eqref{eq:L-eps-space}, with
$u_\varepsilon(0)=u_{0,\varepsilon};$

\item[(iv)]
the solution net $(u_\varepsilon)_\varepsilon$ is
$L^\infty(0,T;H^1(\mathbb R^d))$-moderate, i.e. there exist constants
$C>0$ and $N\in\mathbb N_0$ such that
\[
\|u_\varepsilon\|_{L^\infty(0,T;H^1(\mathbb R^d))}
\le
C\,\omega(\varepsilon)^{-N},
\quad
\varepsilon\in(0,1].
\]

\end{enumerate}
\end{defn}

\begin{thm}[Existence of very weak solutions]
\label{thm:existence-VWS-space}

Let  $u_0\in\mathcal E'(\mathbb R^d),$
and let
$a,b,c\in\mathcal D'(\mathbb R^d)$
be strictly positive in the sense of distributions, i.e. there exist
constants $a_0,b_0,c_0>0$ such that
$\langle a-a_0,\varphi\rangle\ge0,
\quad
\langle b-b_0,\varphi\rangle\ge0,
\quad
\langle c-c_0,\varphi\rangle\ge0$
for every
$\varphi\in\mathcal D(\mathbb R^d),
\quad
\varphi\ge0.$

Let 
$(a_\varepsilon,b_\varepsilon,c_\varepsilon,
u_{0,\varepsilon})_\varepsilon$
be the spatial regularisations defined by
\eqref{eq:spatial-reg-defs}.

Assume that
$(a_\varepsilon)_\varepsilon$,
$(b_\varepsilon)_\varepsilon$, and
$(c_\varepsilon)_\varepsilon$
are $L^\infty(\mathbb R^d)$-moderate and that
$(u_{0,\varepsilon})_\varepsilon$ is
$H^1(\mathbb R^d)$-moderate.

Then the Cauchy problem \eqref{eq:1.1} admits a very weak solution
$(u_\varepsilon)_\varepsilon$ in the sense of
Definition~\ref{def:VWS-space}.
\end{thm}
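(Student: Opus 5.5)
The plan is to verify conditions (i)--(iv) of Definition~\ref{def:VWS-space} one at a time for the Friedrichs nets \eqref{eq:spatial-reg-defs}. The real content lies in (iii) and (iv), and both follow from Lemma~\ref{lem:exist-uniq} once we track how its constant depends on $\varepsilon$.

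Conditions (i) and (ii) are essentially given. Moderateness of $(a_\varepsilon)_\varepsilon,(b_\varepsilon)_\varepsilon,(c_\varepsilon)_\varepsilon$ in $L^\infty$ and of $(u_{0,\varepsilon})_\varepsilon$ in $H^1$ is part of the hypotheses. The uniform lower bounds in (ii) are exactly \eqref{eq:uniform-positivity-eps}. These bounds hold because $a-a_0\ge0$ in $\mathcal D'$ and $\psi_{\omega(\varepsilon)}(x-\cdot)\ge0$ is a test function, which gives
\[
a_\varepsilon(x)-a_0=\langle a-a_0,\psi_{\omega(\varepsilon)}(x-\cdot)\rangle\ge0,
\]
using $\int\psi=1$; the same argument applies to $b$ and $c$. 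Consequently, for each fixed $\varepsilon$ we have real-valued coefficients $a_\varepsilon,b_\varepsilon,c_\varepsilon\in L^\infty(\mathbb R^d)$, finite by moderateness, bounded below by $a_0,b_0,c_0$, and initial datum $u_{0,\varepsilon}\in H^1(\mathbb R^d)$.

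For (iii), apply Lemma~\ref{lem:exist-uniq} to each regularised problem \eqref{eq:reg-problem-space}--\eqref{eq:L-eps-space} with these coefficients and data. This gives a unique weak solution $u_\varepsilon\in L^\infty(0,T;H^1)\cap C([0,T];L^2)$ with $u_\varepsilon(0)=u_{0,\varepsilon}$.

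For (iv), use the a priori estimate \eqref{eq:apriori-final-Linfty-corr}. It gives, for a.e.\ $t$,
\[
\|u_\varepsilon(t)\|_{H^1}^2\le C_\varepsilon\|u_{0,\varepsilon}\|_{H^1}^2,
\qquad
C_\varepsilon=2\Bigl(1+\tfrac1{a_0}+\tfrac1{b_0}\Bigr)\bigl(1+\|a_\varepsilon\|_{L^\infty}+\|b_\varepsilon\|_{L^\infty}+\|c_\varepsilon\|_{L^\infty}\bigr).
\]
The key point is that the first factor is independent of $\varepsilon$, because the positivity constants are uniform. This is where strict positivity matters: without it, $1/a_0$ would become $\varepsilon$-dependent and would itself need a moderateness assumption. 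For the second factor, take the three $L^\infty$-moderateness bounds with exponents $N_a,N_b,N_c$ and set $N_1=\max(N_a,N_b,N_c)$. Since $\omega(\varepsilon)\le1$, this gives $1+\|a_\varepsilon\|_{L^\infty}+\|b_\varepsilon\|_{L^\infty}+\|c_\varepsilon\|_{L^\infty}\le C'\omega(\varepsilon)^{-N_1}$. Combining with $\|u_{0,\varepsilon}\|_{H^1}\le C''\omega(\varepsilon)^{-N_0}$ and taking square roots, we get
\[
\|u_\varepsilon\|_{L^\infty(0,T;H^1)}\le C\,\omega(\varepsilon)^{-N_1/2-N_0}.
\]
Replacing $N_1/2$ by $\lceil N_1/2\rceil$, which is allowed since $\omega\le1$, yields an integer exponent, and hence (iv).

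The main obstacle is mild bookkeeping in step (iv). The estimate \eqref{eq:apriori-final-Linfty-corr} holds only for a.e.\ $t$, so it has to be read as an ess-sup bound. We also need to check that the norm on the left-hand side dominates $\|u_\varepsilon(t)\|_{H^1}^2$; it does, because it contains $\|u\|_{L^2}^2+\|\nabla u\|_{L^2}^2$. Finally, all constants must be independent of $\varepsilon$ apart from the explicit powers of $\omega(\varepsilon)$, which the argument above tracks.
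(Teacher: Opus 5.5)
Your proposal is correct and follows the paper's route. You obtain positivity of $a_\varepsilon,b_\varepsilon,c_\varepsilon$ by pairing the non-negative distributions with $\psi_{\omega(\varepsilon)}(x-\cdot)\ge0$, take existence and uniqueness from Lemma~\ref{lem:exist-uniq}, and get moderateness by inserting the $L^\infty$- and $H^1$-moderateness bounds into the explicit constant $C_\varepsilon$ of the a priori estimate; your extra bookkeeping (reading that estimate as an ess-sup bound and rounding $N_1/2+N_0$ up to an integer using $\omega(\varepsilon)\le1$) just makes explicit what the paper calls ``suitable constants''.
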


\begin{proof}
Fix $\varepsilon\in(0,1]$.

The regularisations in \eqref{eq:spatial-reg-defs} are smooth. By the
$L^\infty(\mathbb R^d)$-moderateness assumption,
$a_\varepsilon,
b_\varepsilon,
c_\varepsilon
\in
L^\infty(\mathbb R^d),$
and by the $H^1(\mathbb R^d)$-moderateness assumption,
$u_{0,\varepsilon}\in H^1(\mathbb R^d).$

Moreover, since the mollifier is non-negative and
$a-a_0$, $b-b_0$, and $c-c_0$ are non-negative distributions,
$a_\varepsilon(x)-a_0
=
\left\langle
a-a_0,
\psi_{\omega(\varepsilon)}(x-\cdot)
\right\rangle
\ge0.$

Similarly,
$b_\varepsilon(x)\ge b_0,
\,
c_\varepsilon(x)\ge c_0.$
Hence,
$a_\varepsilon\ge a_0,
\,
b_\varepsilon\ge b_0,
\,
c_\varepsilon\ge c_0.$

By the regular well-posedness result of
Lemma~\ref{lem:exist-uniq}, applied with coefficients
$a_\varepsilon,b_\varepsilon,c_\varepsilon$, there exists a unique weak
solution $u_\varepsilon$ of
\eqref{eq:reg-problem-space}--\eqref{eq:L-eps-space}.
In particular,
\[
u_\varepsilon
\in
L^\infty(0,T;H^1(\mathbb R^d))
\cap
C([0,T];L^2(\mathbb R^d)).
\]

We now prove the moderateness of $(u_\varepsilon)_\varepsilon$.

Applying the a priori estimate from Lemma~\ref{lem:exist-uniq}, we obtain
for every $t\in[0,T]$,
\begin{align}
&
\|u_\varepsilon(t)\|_{L^2(\mathbb R^d)}^2
+
\|\nabla u_\varepsilon(t)\|_{L^2(\mathbb R^d)}^2
+
\|(-\Delta)^{s/2}u_\varepsilon(t)\|_{L^2(\mathbb R^d)}^2
\nonumber\\
&\qquad\le
C_\varepsilon
\|u_{0,\varepsilon}\|_{H^1(\mathbb R^d)}^2,
\label{eq:apriori-eps-corr}
\end{align}
where one may take
\begin{equation}\label{eq:Ce-corr}
C_\varepsilon
=
2
\left(
1+\frac{1}{a_0}+\frac{1}{b_0}
\right)
\left(
1
+
\|a_\varepsilon\|_{L^\infty}
+
\|b_\varepsilon\|_{L^\infty}
+
\|c_\varepsilon\|_{L^\infty}
\right).
\end{equation}

By the $L^\infty(\mathbb R^d)$-moderateness of the coefficient nets,
there exist constants $C>0$ and $N\in\mathbb N_0$ such that
\[
\|a_\varepsilon\|_{L^\infty}
+
\|b_\varepsilon\|_{L^\infty}
+
\|c_\varepsilon\|_{L^\infty}
\le
C\,\omega(\varepsilon)^{-N}.
\]
Hence, after changing $C$ and $N$ if necessary,
\[
C_\varepsilon
\le
C\,\omega(\varepsilon)^{-N}.
\]

Moreover, since $(u_{0,\varepsilon})_\varepsilon$ is
$H^1(\mathbb R^d)$-moderate, there exist constants
$C'>0$ and $N'\in\mathbb N_0$ such that
\[
\|u_{0,\varepsilon}\|_{H^1(\mathbb R^d)}^2
\le
C'\,\omega(\varepsilon)^{-N'}.
\]

Combining the above estimates yields
\[
\sup_{t\in[0,T]}
\|u_\varepsilon(t)\|_{H^1(\mathbb R^d)}^2
\le
\widetilde C\,
\omega(\varepsilon)^{-\widetilde N}
\]
for suitable constants
$\widetilde C>0$ and
$\widetilde N\in\mathbb N_0$.

Therefore,
\[
\|u_\varepsilon\|_{L^\infty(0,T;H^1(\mathbb R^d))}
\le
\widehat C
\omega(\varepsilon)^{-\widehat N}
\]
for suitable constants
$\widehat C>0$ and
$\widehat N\in\mathbb N_0$.

Thus $(u_\varepsilon)_\varepsilon$ is a very weak solution of
\eqref{eq:1.1}.
\end{proof}

\subsection{Uniqueness}\label{subsec:uniq-negl-space}

Before stating the uniqueness result, we introduce negligible nets.

\begin{defn}[Negligible nets]\label{def:negligible-space}
Let $X$ be a Banach space.
A net $(g_\varepsilon)_\varepsilon\subset X$ is called
$X$-negligible if, for every $q\in\mathbb N$, there exists
$C_q>0$ such that
\[
\|g_\varepsilon\|_X
\le
C_q\,\omega(\varepsilon)^q,
\quad
\varepsilon\in(0,1].
\]
\end{defn}

\begin{rem}\label{rem:ideal-space}
The product of a negligible net and a moderate net remains negligible
whenever the corresponding multiplication is continuous in the spaces
under consideration.

In particular, if $(n_\varepsilon)_\varepsilon$ is negligible in
$L^\infty(\mathbb R^d)$ and $(m_\varepsilon)_\varepsilon$ is moderate in
$L^2(\mathbb R^d)$, then
\[
\|n_\varepsilon m_\varepsilon\|_{L^2}
\le
\|n_\varepsilon\|_{L^\infty}
\|m_\varepsilon\|_{L^2},
\]
and hence
$(n_\varepsilon m_\varepsilon)_\varepsilon$
is negligible in $L^2(\mathbb R^d)$.

The same argument applies when $m_\varepsilon$ is replaced by
$\nabla m_\varepsilon$ or
$(-\Delta)^{s/2}m_\varepsilon$, provided the corresponding nets are
moderate.
\end{rem}

\begin{thm}[Uniqueness of very weak solutions]
\label{thm:uniq-VWS-space}

Let $a,b,c$ and $u_0$ satisfy the assumptions of
Definition~\ref{def:VWS-space}.

Let $(a_\varepsilon,b_\varepsilon,c_\varepsilon)_\varepsilon$
and
$(\widetilde a_\varepsilon,
\widetilde b_\varepsilon,
\widetilde c_\varepsilon)_\varepsilon$
be two $L^\infty(\mathbb R^d)$-moderate regularisations of
$(a,b,c)$ such that
$(a_\varepsilon-\widetilde a_\varepsilon)_\varepsilon,
\quad
(b_\varepsilon-\widetilde b_\varepsilon)_\varepsilon,
\quad
(c_\varepsilon-\widetilde c_\varepsilon)_\varepsilon$
are negligible in $L^\infty(\mathbb R^d)$.

Let
$(u_{0,\varepsilon})_\varepsilon,
\quad
(\widetilde u_{0,\varepsilon})_\varepsilon$
be two $H^1(\mathbb R^d)$-moderate regularisations of $u_0$ such that
$(u_{0,\varepsilon}
-
\widetilde u_{0,\varepsilon})_\varepsilon$
is negligible in $H^1(\mathbb R^d)$.

Assume furthermore that
$a_\varepsilon\ge a_0>0,
\quad
b_\varepsilon\ge b_0>0,
\quad
c_\varepsilon\ge c_0>0,$
and
$\widetilde a_\varepsilon\ge a_0>0,
\quad
\widetilde b_\varepsilon\ge b_0>0,
\quad
\widetilde c_\varepsilon\ge c_0>0$
for every $\varepsilon\in(0,1]$.

Let $u_\varepsilon$ and $\widetilde u_\varepsilon$ be the corresponding
regular weak solutions.

If both solution nets are
$L^\infty(0,T;H^1(\mathbb R^d))$-moderate, then
$(u_\varepsilon-\widetilde u_\varepsilon)_\varepsilon$
is negligible in
\[
L^\infty(0,T;L^2(\mathbb R^d))
\cap
L^2(0,T;H^1(\mathbb R^d)).
\]

Thus the very weak solution is unique up to negligible perturbations of
the admissible regularising nets.
\end{thm}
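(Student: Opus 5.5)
We will estimate the difference $w_\varepsilon:=u_\varepsilon-\widetilde u_\varepsilon$ directly through an energy argument, in the same spirit as the uniqueness part of Lemma~\ref{lem:exist-uniq}. The error from replacing one set of coefficients by the other will be placed on the right-hand side as a source term. Subtracting the weak formulations satisfied by $u_\varepsilon$ (with $B_\varepsilon$) and $\widetilde u_\varepsilon$ (with $\widetilde B_\varepsilon$), we get, for a.e.\ $t$ and all $\varphi\in H^1(\mathbb R^d)$,
\[
\langle \partial_t w_\varepsilon,\varphi\rangle+\widetilde B_\varepsilon(w_\varepsilon,\varphi)=-F_\varepsilon(t,\varphi),
\]
where
\[
F_\varepsilon(t,\varphi):=\int (a_\varepsilon-\widetilde a_\varepsilon)\nabla u_\varepsilon\cdot\overline{\nabla\varphi}\,dx+\int (b_\varepsilon-\widetilde b_\varepsilon)(-\Delta)^{s/2}u_\varepsilon\,\overline{(-\Delta)^{s/2}\varphi}\,dx+\int (c_\varepsilon-\widetilde c_\varepsilon)u_\varepsilon\overline{\varphi}\,dx,
\]
and $w_\varepsilon(0)=u_{0,\varepsilon}-\widetilde u_{0,\varepsilon}$.

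We then test with $\varphi=w_\varepsilon(t)$ and take real parts. Lemma~\ref{lem:lions-magenes} gives
\[
\tfrac12\tfrac{d}{dt}\|w_\varepsilon\|_{L^2}^2+\widetilde B_\varepsilon(w_\varepsilon,w_\varepsilon)=-\Re F_\varepsilon(t,w_\varepsilon).
\]
By the uniform lower bounds $\widetilde a_\varepsilon\ge a_0$, $\widetilde b_\varepsilon\ge b_0$, $\widetilde c_\varepsilon\ge c_0$,
\[
\widetilde B_\varepsilon(w_\varepsilon,w_\varepsilon)\ge \kappa\bigl(\|\nabla w_\varepsilon\|^2_{L^2}+\|(-\Delta)^{s/2}w_\varepsilon\|^2_{L^2}+\|w_\varepsilon\|^2_{L^2}\bigr)\ge\kappa\|w_\varepsilon\|_{H^1}^2,
\]
with $\kappa=\min(a_0,b_0,c_0)$ independent of $\varepsilon$. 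This $\varepsilon$-uniform coercivity is exactly what assumption (ii) buys us. For the source term, Cauchy--Schwarz, Lemma~\ref{lem:fourier-domination}, and Young's inequality give
\[
|F_\varepsilon(t,w_\varepsilon)|\le \delta_\varepsilon\bigl(\|\nabla u_\varepsilon\|+\|(-\Delta)^{s/2}u_\varepsilon\|+\|u_\varepsilon\|\bigr)\|w_\varepsilon\|_{H^1}\cdot C\le \tfrac{\kappa}{2}\|w_\varepsilon\|_{H^1}^2+\tfrac{C}{\kappa}\delta_\varepsilon^2\|u_\varepsilon(t)\|_{H^1}^2,
\]
where $\delta_\varepsilon:=\|a_\varepsilon-\widetilde a_\varepsilon\|_{L^\infty}+\|b_\varepsilon-\widetilde b_\varepsilon\|_{L^\infty}+\|c_\varepsilon-\widetilde c_\varepsilon\|_{L^\infty}$.

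Absorbing the first term and integrating in time, we obtain for all $t\in[0,T]$
\[
\|w_\varepsilon(t)\|_{L^2}^2+\kappa\int_0^t\|w_\varepsilon\|_{H^1}^2\,d\tau\le \|u_{0,\varepsilon}-\widetilde u_{0,\varepsilon}\|_{L^2}^2+\tfrac{2C T}{\kappa}\,\delta_\varepsilon^2\,\|u_\varepsilon\|^2_{L^\infty(0,T;H^1)}.
\]
No Gronwall step is needed, since the zero-order term has a good sign. The initial term is negligible because $H^1$-negligibility implies $L^2$-negligibility. Moreover, $\delta_\varepsilon\le C_q\omega(\varepsilon)^q$ for every $q$, while $\|u_\varepsilon\|_{L^\infty(0,T;H^1)}\le C\omega(\varepsilon)^{-N}$ by moderateness. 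Following Remark~\ref{rem:ideal-space}, we choose $q$ as $q'+N$ to conclude that the right-hand side is $O(\omega(\varepsilon)^{2q'})$ for every $q'$. Taking the supremum in $t$ yields negligibility in both $L^\infty(0,T;L^2)$ and $L^2(0,T;H^1)$.

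The main technical point is justifying the test $\varphi=w_\varepsilon(t)$ and the chain rule. Since $w_\varepsilon\in L^2(0,T;H^1)$ with $\partial_tw_\varepsilon\in L^2(0,T;(H^1)')$, Lemma~\ref{lem:lions-magenes} applies directly. We also need $F_\varepsilon(t,\cdot)\in (H^1)'$ with the stated bound, which follows from $u_\varepsilon\in L^\infty(0,T;H^1)$. A second delicate point is to keep all constants (in particular $\kappa$) independent of $\varepsilon$, so that only the moderate factor $\omega(\varepsilon)^{-N}$ ever multiplies the negligible quantities.
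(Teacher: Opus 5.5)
Your proposal is correct and follows essentially the same energy argument as the paper: subtract the two weak formulations, test with $w_\varepsilon(t)$, use the $\varepsilon$-uniform coercivity from the lower bounds $a_0,b_0,c_0$ to absorb the coefficient-defect term via Young's inequality, and integrate in time, so that negligible coefficient differences multiply only moderate solution norms. The only difference is cosmetic: you put $\widetilde B_\varepsilon$ on $w_\varepsilon$ with the defect acting on $u_\varepsilon$, whereas the paper puts $B_\varepsilon$ on $w_\varepsilon$ with the defect acting on $\widetilde u_\varepsilon$.
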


\begin{proof}
Set
$w_\varepsilon
:=
u_\varepsilon-\widetilde u_\varepsilon.$
Define
\[
B_\varepsilon(u,v)
:=
\int_{\mathbb R^d}
a_\varepsilon
\nabla u\cdot\overline{\nabla v}\,dx
+
\int_{\mathbb R^d}
b_\varepsilon
(-\Delta)^{s/2}u
\overline{(-\Delta)^{s/2}v}\,dx
+
\int_{\mathbb R^d}
c_\varepsilon u\overline v\,dx,
\]
and define $\widetilde B_\varepsilon$ analogously.

Subtracting the two weak formulations gives
\[
\langle
\partial_t w_\varepsilon,\varphi
\rangle
+
B_\varepsilon(w_\varepsilon,\varphi)
=
(\widetilde B_\varepsilon-B_\varepsilon)
(\widetilde u_\varepsilon,\varphi),
\]
for all $\varphi\in H^1(\mathbb R^d)$, with
$w_\varepsilon(0)
=
u_{0,\varepsilon}
-
\widetilde u_{0,\varepsilon}.$

Taking $\varphi=w_\varepsilon(t)$, we obtain
\begin{align*}
\frac12
\frac{d}{dt}
\|w_\varepsilon(t)\|_{L^2}^2
&+
a_0
\|\nabla w_\varepsilon(t)\|_{L^2}^2
+
b_0
\|(-\Delta)^{s/2}w_\varepsilon(t)\|_{L^2}^2
\\
&+
c_0
\|w_\varepsilon(t)\|_{L^2}^2
\\
&\le
\left|
(\widetilde B_\varepsilon-B_\varepsilon)
(\widetilde u_\varepsilon(t),w_\varepsilon(t))
\right|.
\end{align*}

Furthermore,
\begin{align*}
&
\left|
(\widetilde B_\varepsilon-B_\varepsilon)
(\widetilde u_\varepsilon,w_\varepsilon)
\right|
\\
&\le
\|a_\varepsilon-\widetilde a_\varepsilon\|_{L^\infty}
\|\nabla\widetilde u_\varepsilon\|_{L^2}
\|\nabla w_\varepsilon\|_{L^2}
\\
&\quad+
\|b_\varepsilon-\widetilde b_\varepsilon\|_{L^\infty}
\|(-\Delta)^{s/2}\widetilde u_\varepsilon\|_{L^2}
\|(-\Delta)^{s/2}w_\varepsilon\|_{L^2}
\\
&\quad+
\|c_\varepsilon-\widetilde c_\varepsilon\|_{L^\infty}
\|\widetilde u_\varepsilon\|_{L^2}
\|w_\varepsilon\|_{L^2}.
\end{align*}

By Young's inequality, the coercive terms can be absorbed, giving
\begin{align*}
\frac{d}{dt}
\|w_\varepsilon(t)\|_{L^2}^2
\le C
\Bigl(
&
\|a_\varepsilon-\widetilde a_\varepsilon\|_{L^\infty}^2
\|\nabla\widetilde u_\varepsilon(t)\|_{L^2}^2
\\
&+
\|b_\varepsilon-\widetilde b_\varepsilon\|_{L^\infty}^2
\|(-\Delta)^{s/2}\widetilde u_\varepsilon(t)\|_{L^2}^2
\\
&+
\|c_\varepsilon-\widetilde c_\varepsilon\|_{L^\infty}^2
\|\widetilde u_\varepsilon(t)\|_{L^2}^2
\Bigr),
\end{align*}
where $C>0$ is independent of $\varepsilon$.

Integrating over $(0,T)$ gives
\[
\|w_\varepsilon\|_{L^\infty(0,T;L^2)}^2
\le
\|u_{0,\varepsilon}
-
\widetilde u_{0,\varepsilon}\|_{L^2}^2
+
C
\int_0^T
\Theta_\varepsilon(t)\,dt,
\]
where
\begin{align*}
\Theta_\varepsilon(t)
:=
&
\|a_\varepsilon-\widetilde a_\varepsilon\|_{L^\infty}^2
\|\nabla\widetilde u_\varepsilon(t)\|_{L^2}^2
\\
&+
\|b_\varepsilon-\widetilde b_\varepsilon\|_{L^\infty}^2
\|(-\Delta)^{s/2}\widetilde u_\varepsilon(t)\|_{L^2}^2
\\
&+
\|c_\varepsilon-\widetilde c_\varepsilon\|_{L^\infty}^2
\|\widetilde u_\varepsilon(t)\|_{L^2}^2.
\end{align*}

The coefficient differences are negligible, while the corresponding
solution norms are moderate. Therefore
$\int_0^T\Theta_\varepsilon(t)\,dt$
is negligible.

Together with the negligibility of
$u_{0,\varepsilon}
-
\widetilde u_{0,\varepsilon},$
this shows that
$\|w_\varepsilon\|_{L^\infty(0,T;L^2)}$
is negligible.

Integrating the complete coercive inequality also gives the negligibility of
$\|\nabla w_\varepsilon\|_{L^2(0,T;L^2)}$
and
$\|(-\Delta)^{s/2}w_\varepsilon\|_{L^2(0,T;L^2)}.$

Therefore,
$(w_\varepsilon)_\varepsilon$
is negligible in
$L^\infty(0,T;L^2(\mathbb R^d))
\cap
L^2(0,T;H^1(\mathbb R^d)).$
\end{proof}

\subsection{Consistency with classical theory}
\label{subsec:consistency-space}

We now show that, under suitable convergence assumptions on the
regularisations, the very weak solution is consistent with the classical
weak solution of the regular problem.

\begin{thm}[Consistency]\label{thm:consistency-space}

Assume $0<s<1$ and let
$a,b,c\in L^\infty(\mathbb R^d)$
satisfy
$a(x)\ge a_0>0,
\quad
b(x)\ge b_0>0,
\quad
c(x)\ge c_0>0$
for a.e. $x\in\mathbb R^d$.

Let
$u_0\in H^1(\mathbb R^d),$
and let $u$ be the unique weak solution of \eqref{eq:1.1}.

Let
$(a_\varepsilon,
b_\varepsilon,
c_\varepsilon,
u_{0,\varepsilon})_\varepsilon$
be regularisations satisfying
$a_\varepsilon\to a,
\quad
b_\varepsilon\to b,
\quad
c_\varepsilon\to c$
in $L^\infty(\mathbb R^d)$, and
$u_{0,\varepsilon}\to u_0$
in $H^1(\mathbb R^d)$ as $\varepsilon\to0$.

Assume moreover that
$a_\varepsilon(x)\ge a_0,
\quad
b_\varepsilon(x)\ge b_0,
\quad
c_\varepsilon(x)\ge c_0$
for a.e. $x\in\mathbb R^d$ and every $\varepsilon\in(0,1]$.

Let $u_\varepsilon$ be the unique weak solution of the corresponding
regularised problem. Then
$u_\varepsilon\to u$
in
$C([0,T];L^2(\mathbb R^d))
\cap
L^2(0,T;H^1(\mathbb R^d))$
as $\varepsilon\to0$.
\end{thm}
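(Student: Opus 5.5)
We plan to argue exactly as in the proof of Theorem~\ref{thm:uniq-VWS-space}, but now comparing $u_\varepsilon$ with the limit solution $u$ rather than with a second regularised solution. Set $w_\varepsilon:=u_\varepsilon-u$ and let $B_\varepsilon$ be the form with coefficients $a_\varepsilon,b_\varepsilon,c_\varepsilon$ and $B$ the form with $a,b,c$. Subtracting the two weak formulations gives, for all $\varphi\in H^1(\mathbb R^d)$ and a.e.\ $t$,
\[
\langle \partial_t w_\varepsilon,\varphi\rangle + B_\varepsilon(w_\varepsilon,\varphi)=(B-B_\varepsilon)(u,\varphi),\qquad w_\varepsilon(0)=u_{0,\varepsilon}-u_0 .
\]
Testing with $\varphi=w_\varepsilon(t)$ is legitimate by the Lions--Magenes Lemma~\ref{lem:lions-magenes}. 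Since $w_\varepsilon\in L^2(0,T;H^1)$ and $\partial_t w_\varepsilon\in L^2(0,T;(H^1)')$, we also get $w_\varepsilon\in C([0,T];L^2)$.

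Taking real parts and using the uniform lower bounds $a_\varepsilon\ge a_0$, $b_\varepsilon\ge b_0$, $c_\varepsilon\ge c_0$, we expect
\[
\tfrac12\tfrac{d}{dt}\|w_\varepsilon\|_{L^2}^2+a_0\|\nabla w_\varepsilon\|_{L^2}^2+b_0\|(-\Delta)^{s/2}w_\varepsilon\|_{L^2}^2+c_0\|w_\varepsilon\|_{L^2}^2\le |(B-B_\varepsilon)(u,w_\varepsilon)|.
\]
We bound the right-hand side term by term by $\|a-a_\varepsilon\|_{L^\infty}\|\nabla u\|_{L^2}\|\nabla w_\varepsilon\|_{L^2}$ plus the analogous $b$ and $c$ terms. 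Young's inequality then absorbs half of each coercive term. Integrating in time yields
\[
\sup_{t\in[0,T]}\|w_\varepsilon(t)\|_{L^2}^2+\int_0^T\!\bigl(\|\nabla w_\varepsilon\|_{L^2}^2+\|w_\varepsilon\|_{L^2}^2\bigr)dt\le C\Bigl(\|u_{0,\varepsilon}-u_0\|_{L^2}^2+\delta_\varepsilon^2\int_0^T\!\|u(t)\|_X^2\,dt\Bigr).
\]
Here $\delta_\varepsilon:=\|a-a_\varepsilon\|_{L^\infty}+\|b-b_\varepsilon\|_{L^\infty}+\|c-c_\varepsilon\|_{L^\infty}$, $\|\cdot\|_X$ is the norm from the proof of Lemma~\ref{lem:exist-uniq}, and $C$ depends only on $a_0,b_0,c_0$.

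The key point is that the forcing involves the \emph{fixed} solution $u$, not $u_\varepsilon$. By Lemma~\ref{lem:exist-uniq}, $\int_0^T\|u\|_X^2\,dt\le TC\|u_0\|_{H^1}^2$, with $C$ depending only on $a,b,c$, so this integral is a finite constant independent of $\varepsilon$. Hence the right-hand side tends to zero, because $\delta_\varepsilon\to0$ and $u_{0,\varepsilon}\to u_0$ in $L^2$. This gives convergence in $C([0,T];L^2)$ (using continuity of $w_\varepsilon$) and in $L^2(0,T;H^1)$.

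The only delicate step is the justification of the energy identity for the difference. The form $B_\varepsilon(w_\varepsilon,w_\varepsilon)$ must be real and bounded below by the coercive quantity. The cross terms must be controlled using only $L^\infty$ convergence of the coefficients and $u\in L^2(0,T;H^1)$; in particular, $\|(-\Delta)^{s/2}u\|_{L^2}\le\|u\|_{H^1}$ by Lemma~\ref{lem:fourier-domination}. This is also where the preserved positivity assumption is essential: without it the constant from absorption would depend on $\varepsilon$. No Gronwall argument is needed, since the $c$-term contributes with a good sign.
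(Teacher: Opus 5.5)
Your argument is correct, but it swaps which bilinear form carries the coercivity compared with the paper.

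\textbf{The paper's route.} It sets $U_\varepsilon=u-u_\varepsilon$ and keeps the \emph{limit} form $B$ on the left. The forcing is then $\langle G_\varepsilon,\varphi\rangle=(B_\varepsilon-B)(u_\varepsilon,\varphi)$. Coercivity only uses $a\ge a_0$, $b\ge b_0$, $c\ge c_0$, but the right-hand side contains the whole family $u_\varepsilon$. So the paper must first establish $\sup_{\varepsilon}\|u_\varepsilon\|_{L^\infty(0,T;H^1)}<\infty$ from Lemma~\ref{lem:exist-uniq}. That uniform bound is where it uses three things: the preserved lower bounds on $a_\varepsilon,b_\varepsilon$ (through the factors $1/a_0$, $1/b_0$ in $C_\varepsilon$), the uniform $L^\infty$ bounds on the coefficients that follow from their convergence, and the $H^1$ convergence of $u_{0,\varepsilon}$.

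\textbf{Your route.} You keep $B_\varepsilon$ on the left, so the preserved positivity gives uniform coercivity directly. Your forcing $(B-B_\varepsilon)(u,\cdot)$ involves only the fixed solution $u$. You therefore need no a priori information on $u_\varepsilon$ beyond its being a weak solution. All you use is $u\in L^2(0,T;H^1)$, $\delta_\varepsilon\to0$, and $u_{0,\varepsilon}\to u_0$ in $L^2$. This route is slightly leaner, and it shows that $L^2$ convergence of the initial data would already suffice for the stated conclusion, with $u_0\in H^1$ and $u_{0,\varepsilon}\in H^1$ still needed so that Lemma~\ref{lem:exist-uniq} produces $u$ and $u_\varepsilon$.

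\textbf{A small correction.} Your closing remark overstates the role of the positivity hypothesis; it is not essential in either route. Since $a_\varepsilon\to a$ in $L^\infty$ and $a\ge a_0$, once $\|a_\varepsilon-a\|_{L^\infty}\le a_0/2$ you have $a_\varepsilon\ge a_0/2$ a.e., and likewise for $b_\varepsilon$ and $c_\varepsilon$. That gives uniform coercivity for all sufficiently small $\varepsilon$, which is all a limit statement needs. The hypothesis mainly ensures that $u_\varepsilon$ exists for every $\varepsilon\in(0,1]$.
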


\begin{proof}
Define
\[
B(v,\varphi)
:=
\int_{\mathbb R^d}
a\nabla v\cdot\overline{\nabla\varphi}\,dx
+
\int_{\mathbb R^d}
b(-\Delta)^{s/2}v
\overline{(-\Delta)^{s/2}\varphi}\,dx
+
\int_{\mathbb R^d}
cv\overline{\varphi}\,dx,
\]
and
\[
B_\varepsilon(v,\varphi)
:=
\int_{\mathbb R^d}
a_\varepsilon\nabla v\cdot\overline{\nabla\varphi}\,dx
+
\int_{\mathbb R^d}
b_\varepsilon(-\Delta)^{s/2}v
\overline{(-\Delta)^{s/2}\varphi}\,dx
+
\int_{\mathbb R^d}
c_\varepsilon v\overline{\varphi}\,dx.
\]

Set
$U_\varepsilon
:=
u-u_\varepsilon.$
For every $\varphi\in H^1(\mathbb R^d)$,
\begin{equation}\label{eq:Ueq-var}
\langle
\partial_tU_\varepsilon,\varphi
\rangle
+
B(U_\varepsilon,\varphi)
=
\langle G_\varepsilon,\varphi\rangle,
\end{equation}
where
\begin{align}
\langle G_\varepsilon,\varphi\rangle
&=
\int_{\mathbb R^d}
(a_\varepsilon-a)
\nabla u_\varepsilon
\cdot
\overline{\nabla\varphi}\,dx
\nonumber\\
&\quad+
\int_{\mathbb R^d}
(b_\varepsilon-b)
(-\Delta)^{s/2}u_\varepsilon
\overline{(-\Delta)^{s/2}\varphi}\,dx
\nonumber\\
&\quad+
\int_{\mathbb R^d}
(c_\varepsilon-c)
u_\varepsilon
\overline{\varphi}\,dx.
\label{eq:Geps-pairing-corr}
\end{align}
Moreover,
$U_\varepsilon(0)
=
u_0-u_{0,\varepsilon}.$

Taking
$\varphi=U_\varepsilon(t)$
in \eqref{eq:Ueq-var}, we obtain
\begin{align}
\frac12
\frac{d}{dt}
\|U_\varepsilon(t)\|_{L^2}^2
&+
\|a^{1/2}\nabla U_\varepsilon(t)\|_{L^2}^2
+
\|b^{1/2}(-\Delta)^{s/2}U_\varepsilon(t)\|_{L^2}^2
\nonumber\\
&+
\|c^{1/2}U_\varepsilon(t)\|_{L^2}^2
=
\langle
G_\varepsilon(t),
U_\varepsilon(t)
\rangle.
\label{eq:energy-Ueps-corr}
\end{align}

By Cauchy--Schwarz and Young's inequality,
\begin{align*}
&
|\langle
G_\varepsilon(t),
U_\varepsilon(t)
\rangle|
\\
&\le
\frac{a_0}{4}
\|\nabla U_\varepsilon(t)\|_{L^2}^2
+
\frac{b_0}{4}
\|(-\Delta)^{s/2}U_\varepsilon(t)\|_{L^2}^2
+
\frac{c_0}{4}
\|U_\varepsilon(t)\|_{L^2}^2
\\
&\quad+
C
\|a_\varepsilon-a\|_{L^\infty}^2
\|\nabla u_\varepsilon(t)\|_{L^2}^2
\\
&\quad+
C
\|b_\varepsilon-b\|_{L^\infty}^2
\|(-\Delta)^{s/2}u_\varepsilon(t)\|_{L^2}^2
\\
&\quad+
C
\|c_\varepsilon-c\|_{L^\infty}^2
\|u_\varepsilon(t)\|_{L^2}^2,
\end{align*}
where $C>0$ is independent of $\varepsilon$.

Using coercivity and integrating over $(0,t)$ yield
\begin{align}
&
\sup_{\tau\in[0,t]}
\|U_\varepsilon(\tau)\|_{L^2}^2
+
\int_0^t
\|\nabla U_\varepsilon(\tau)\|_{L^2}^2\,d\tau
\nonumber\\
&\quad+
\int_0^t
\|(-\Delta)^{s/2}U_\varepsilon(\tau)\|_{L^2}^2\,d\tau
\nonumber\\
&\le
C_T
\Bigl[
\|u_0-u_{0,\varepsilon}\|_{L^2}^2
+
\|a_\varepsilon-a\|_{L^\infty}^2
\|\nabla u_\varepsilon\|_{L^2(0,t;L^2)}^2
\nonumber\\
&\qquad+
\|b_\varepsilon-b\|_{L^\infty}^2
\|(-\Delta)^{s/2}u_\varepsilon\|_{L^2(0,t;L^2)}^2
+
\|c_\varepsilon-c\|_{L^\infty}^2
\|u_\varepsilon\|_{L^2(0,t;L^2)}^2
\Bigr].
\label{eq:Ueps-final-est-corr}
\end{align}

By Lemma~\ref{lem:exist-uniq} and the convergence assumptions,
\[
\sup_{\varepsilon\in(0,1]}
\|u_\varepsilon\|_{L^\infty(0,T;H^1(\mathbb R^d))}
<\infty.
\]
Since
$H^1(\mathbb R^d)
\hookrightarrow
H^s(\mathbb R^d),
\quad
0<s<1,$
we also have
\[
\sup_{\varepsilon\in(0,1]}
\|(-\Delta)^{s/2}u_\varepsilon\|_
{L^\infty(0,T;L^2(\mathbb R^d))}
<\infty.
\]

Therefore,
\[
\|\nabla u_\varepsilon\|_{L^2(0,T;L^2)}
+
\|(-\Delta)^{s/2}u_\varepsilon\|_{L^2(0,T;L^2)}
+
\|u_\varepsilon\|_{L^2(0,T;L^2)}
\le C
\]
with a constant $C>0$ independent of $\varepsilon$.

Since
$u_{0,\varepsilon}\to u_0
\quad
\text{in }H^1(\mathbb R^d)$
and
$a_\varepsilon\to a,
\quad
b_\varepsilon\to b,
\quad
c_\varepsilon\to c
\quad
\text{in }L^\infty(\mathbb R^d),$
the right-hand side of
\eqref{eq:Ueps-final-est-corr}
tends to zero.

Hence
$u_\varepsilon\to u$
in
$L^\infty(0,T;L^2(\mathbb R^d))
\cap
L^2(0,T;H^1(\mathbb R^d)).$

Finally, by Lemma~\ref{lem:exist-uniq},
\[
u,u_\varepsilon
\in
C([0,T];L^2(\mathbb R^d)).
\]
Thus
$U_\varepsilon
=
u-u_\varepsilon
\in
C([0,T];L^2(\mathbb R^d)).$

For a continuous $L^2$-valued function, the supremum and essential
supremum coincide. Consequently,
\[
\|U_\varepsilon\|_{C([0,T];L^2)}
=
\|U_\varepsilon\|_{L^\infty(0,T;L^2)}
\to0.
\]

Therefore,
$u_\varepsilon\to u$
in
$C([0,T];L^2(\mathbb R^d))
\cap
L^2(0,T;H^1(\mathbb R^d)).$
\end{proof}

\begin{rem}\label{rem:consistency-H1sup}
Under the assumptions of
Theorem~\ref{thm:consistency-space}, the above argument gives convergence
in
\[
C([0,T];L^2(\mathbb R^d))
\cap
L^2(0,T;H^1(\mathbb R^d)).
\]
Convergence in
$L^\infty(0,T;H^1(\mathbb R^d))$
would in general require additional regularity assumptions on the
coefficients, the initial datum, or the corresponding solutions.
\end{rem}

\section{Conclusion}\label{4}
In this paper, we studied the Cauchy problem for a parabolic equation driven by a mixed local--nonlocal operator combining a uniformly elliptic diffusion term and a fractional diffusion term, together with a zeroth-order potential. First, under bounded measurable coefficients satisfying uniform positivity assumptions, we proved well-posedness in the weak sense and derived a priori energy estimates in the natural energy space. 

Next, we extended the theory to spatially irregular (distributional) coefficients and compactly supported distributional initial data by adopting a Friedrichs-type regularisation approach. We introduced suitable notions of moderateness and negligibility for the regularising nets, established the existence of very weak solution nets for admissible regularisations, and proved uniqueness modulo negligible perturbations of admissible regularisations. Finally, we showed consistency with the classical weak theory: whenever the data are regular and the regularisations converge appropriately while preserving uniform positivity, the very weak solution converges to the unique weak solution.

The results provide a framework for mixed local--nonlocal parabolic equations with spatial singularities that cannot be treated directly within the standard distributional formulation. Possible extensions include equations with source terms, nonlinear mixed local--nonlocal models, and the development and analysis of numerical approximation schemes for very weak solutions.

\section*{Conflict of interest statement} The authors declare that they have no conflict of interest.

\section*{Funding}
\noindent
The research is financially supported by the FWO Research Grant G083525N: Evolutionary partial differential equations with strong singularities, and by the Methusalem programme of the Ghent University Special Research Fund (BOF) (Grant number 01M01021), also EPSRC grant UKRI3645 and by a grant from the Ministry of Science and Higher Education of the Republic of Kazakhstan (No. AP27508473).

\section*{Author contributions}
\begin{itemize}
\item[-] \textbf{Arshyn Altybay:}
Conceptualization, Investigation, Writing -- original draft,
Writing -- review and editing.

\item[-] \textbf{Michael Ruzhansky:}
Conceptualization, Investigation, Supervision,
Writing -- review and editing.

\end{itemize}


\end{document}